\documentclass[11pt]{article}

\usepackage[
	a4paper,
	margin=1in
]{geometry}
\usepackage{setspace}
\usepackage{sectsty} 
\usepackage{titlesec}
\usepackage{appendix}

\usepackage{amsmath, amssymb, amsthm}
\usepackage{mathtools}

\usepackage[
	setpagesize=false,
	colorlinks=true,
	linkcolor=BrickRed,
        citecolor=OliveGreen,
        urlcolor=black,
	pdfencoding=auto,
	psdextra,
]{hyperref}
\usepackage{cleveref}

\usepackage{etoolbox} 
\usepackage[shortlabels]{enumitem}
\usepackage{xparse} 

\usepackage{graphicx}
\usepackage[dvipsnames]{xcolor}
\usepackage{tikz}

\usepackage{booktabs}
\usepackage{placeins}
\usepackage{float}

\usepackage{todonotes}
\usepackage[
    deletedmarkup=sout,
    commentmarkup=uwave,
    authormarkuptext=name
]{changes}

\usepackage{comment}

\setlist[enumerate,1]{label=(\arabic*), ref=(\arabic*)}
\setlist[enumerate,3]{label=(\roman*), ref=(\roman*)}

\allsectionsfont{\boldmath} 

\titlelabel{\thetitle.\quad}

\theoremstyle{plain}
\newtheorem{theorem}{Theorem}[section]
\newtheorem{lemma}[theorem]{Lemma}
\newtheorem{corollary}[theorem]{Corollary}

\newtheorem{observation}[theorem]{Observation}
\newtheorem{conjecture}[theorem]{Conjecture}

\newtheorem{claim}[theorem]{Claim}
\newtheorem*{claim*}{Claim}

\makeatletter
\newenvironment{claimproof}[1][Proof]{\par
	\pushQED{\qed}%
	
	\normalfont \topsep6\p@\@plus6\p@\relax
	\trivlist
	\item[\hskip\labelsep
	\textit{#1}\@addpunct{.}~]\ignorespaces
}{%
	\popQED\endtrivlist\@endpefalse
}
\makeatother

\theoremstyle{definition}
\newtheorem{definition}[theorem]{Definition}
\newtheorem*{definition*}{Definition}

\newcommand{\calL}{\mathcal{L}}

\newcommand{\ve}{\varepsilon}

\newcommand{\norm}[1]{\left\lVert#1\right\rVert_{\mathbb{R}/\mathbb{Z}}}

\newcommand{\defeq}{\coloneqq}

\title{Lonely runners in real life: Sharp bounds for time-dependent velocities}

\author{
Hyunwoo Lee%
        \thanks{Department of Mathematical Sciences, KAIST, South Korea and Extremal Combinatorics and Probability Group (ECOPRO), Institute for Basic Science (IBS).
        E-mail: {\ttfamily hyunwoo.lee@kaist.ac.kr.} Supported by the National Research Foundation of Korea (NRF) grant funded by the Korea government(MSIT) No. RS-2023-00210430, and the Institute for Basic Science (IBS-R029-C4).}
}

\usepackage[square,sort,comma,numbers]{natbib}
\begin{document}
\maketitle

\begin{abstract}
    Motivated by the celebrated Lonely Runner Conjecture, we study a variant in which the runners have time-dependent velocities. Let $n \geq 3$ runners start from the same point on the unit circle, where each runner $i\in[n]$ has a locally integrable velocity function $\nu_i\in L^1_{\mathrm{loc}}(\mathbb{R}_{>0})$.
    Assume that their velocities are strictly ordered almost everywhere and that the relative distance between every pair diverges.
    
    We prove that each of the slowest and fastest runners is at a distance strictly larger than $2^{-n+1}$ from every other runner at some time. Moreover, we show that the distance $2^{-n+1}$ is optimal. On the other hand, we construct examples in which every intermediate runner remains arbitrarily close to another runner at all times.
    As a consequence, we also obtain a sharp nonlinear analogue of a classical theorem of Schoenberg on billiard ball motion in the unit cube.
\end{abstract}


\section{Introduction}\label{sec:intro}

With a deep connection to various areas of mathematics, including geometry, harmonic analysis, and ergodic theory, Diophantine approximation is a central theme in number theory, which studies approximations of real numbers by rational numbers. To measure the approximation, for a given $\alpha \in \mathbb{R}$, we denote $\norm{\alpha}$ by the difference between $\alpha$ and the nearest integer to $\alpha$. Dirichlet's approximation theorem, one of the fundamental results in Diophantine approximation, states that for any $\alpha \in \mathbb{R}$ and $N \geq 1$, there exist two integers $1\leq p, q \leq N$ such that 
\begin{equation*}
    \norm{\alpha - \frac{p}{q}} \leq \frac{1}{\lfloor N \rfloor + 1}.
\end{equation*}
This implies that the inequality
\begin{equation}\label{eq:diricklet}
    \max_{t\in \mathbb{R}} \min\{\norm{t\cdot i}: i\in [n]\} \leq \frac{1}{n+1}.
\end{equation}

A little less than 60 years ago, Wills~\cite{LR1} conjectured that the converse version of \eqref{eq:diricklet} holds, which is now well-known as the Lonely Runner Conjecture.\footnote{See also Cusick~\cite{LR2} for a reformulation of the conjecture in terms of the view-obstruction problem.}

\begin{conjecture}[Lonely Runner Conjecture~\cite{LR1,LR2}]\label{conj:LR}
    Let $\alpha_1 < \dots < \alpha_n$ be nonzero reals. Then there exists $t > 0$ such that
    \begin{equation}\label{eq:LR}
        \min\{\norm{t\cdot \alpha_i}: i\in [n]\} \geq \frac{1}{n+1}.
    \end{equation}
\end{conjecture}
Despite the simplicity of its statement, Conjecture~\ref{conj:LR} is notoriously difficult, with the best known result to date covering only the case $n \leq 13$~\cite{4runners,6runners,7runners,Renault,Zonotope,8runners,9runners,10runners,13runners} with computer-assisted proofs for some cases. 

Beyond the small cases, systematic approaches to Conjecture~\ref{conj:LR} have been extensively developed. For instance, Tao~\cite{Tao} improved the lower bound of the left-hand side of \eqref{eq:LR} to $\frac{1}{2n} + \Omega\left( \frac{\log n}{n^2 (\log \log n)^2} \right)$ from $\frac{1}{2n} + \Omega\left( \frac{1}{n^2} \right)$~\cite{Chen,Chen-Cusick,Perarnau-Serra} by exploiting Bohr sets. Very recently, Bedert~\cite{Bedert} improved the bound to $\frac{1}{2n} + \frac{1}{n^{5/3 + o(1)}}$ by analyzing Bohr set with several new ideas.
Also, Bohman, Holzman, and Kleitman~\cite{6runners} show that it suffices to deal with $\alpha_i$s that are integers in the statement of Conjecture~\ref{conj:LR}. With this integer reduction, Tao~\cite{Tao} shows that to prove Conjecture~\ref{conj:LR}, it suffices to verify only a finite number of integer velocities. This was subsequently refined by Malikiosis, Santos, and Schymura~\cite{Zonotope}. We refer the reader to the papers~\cite{survey} for more discussions about the lonely runner conjecture and references. 

The name \emph{Lonely Runner Conjecture} originated from the following thought-experimental reformulation of Conjecture~\ref{conj:LR} by Goddyn~\cite{name}. Assume there are $n$ runners on the unit circle $\mathbb{R}/\mathbb{Z}$ circuit and they start from the same point. If all runners run at constant velocity and their speeds are pairwise distinct. Then the conjecture asserts that for every runner, there exists a time $t > 0$ such that the runner is at a distance of at least $\frac{1}{n}$ from every other runner. Since the distance between the runners is invariant under translations, one can easily show that this reformulation is equivalent to Conjecture~\ref{conj:LR} by fixing an arbitrary runner and considering relative positions and relative velocities. Also, since we fix a runner, we initialize $n = n+1$. 

In this paper, we consider a variant of the Lonely Runner Conjecture by considering a more flexible model that reflects real-world running. In the real world, runners cannot run with a constant velocity. Hence, we consider a model that allows each runner's velocity to vary over time.
Also, runners have different levels of running abilities.
To prevent intentional avoidance of the loneliness in the sense of \eqref{eq:LR}, we define the term \emph{innocent} for a set of runners as follows. 

\begin{definition}
    Let there be $n$ runners, namely $[n]$, with velocity function $\nu_i \in L^1_{\mathrm{loc}}(\mathbb{R}_{> 0})$ for each $i\in [n]$. We say these runners are innocent if
    \begin{enumerate}
        \item[(i)] $\nu_1 < \nu_2 < \cdots <\nu_n$ almost everywhere on $\mathbb{R}_{> 0}$,
        \item[(ii)] for every $1\leq i < j \leq n$,
        $$
            \int_0^\infty \bigl(\nu_j(t)-\nu_i(t)\bigr) dt = \infty.
        $$
    \end{enumerate}
\end{definition}
Condition $(i)$ reflects that the runners have different levels of ability, while condition $(ii)$ reflects that these differences are quantitatively meaningful. In fact, it is easy to find velocity functions for which each runner can always find another runner at an arbitrarily close distance at all times if at least one of conditions fails; hence $(i)$ and $(ii)$ are necessary.
The following is our first main result.

\begin{theorem}\label{thm:main}
    Let $n \geq 3$ be an integer. Assume $n$ innocent runners run the unit circle $\mathbb{R}/\mathbb{Z}$ and start the run at the same point. Then each of the slowest and fastest runners is at a distance strictly larger than $2^{-n+1}$ from every other runner at some time $t > 0$.
\end{theorem}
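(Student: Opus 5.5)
By symmetry (replace every velocity $\nu_i$ by $-\nu_{n+1-i}$, which preserves innocence and swaps the roles of slowest and fastest), it suffices to treat the fastest runner $n$. We pass to relative coordinates: for $i<n$ let $f_i(t) \defeq \int_0^t(\nu_n(s)-\nu_i(s))\,ds$, the position of runner $n$ relative to runner $i$.

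These functions have three useful properties.
\begin{itemize}
    \item Each $f_i$ is absolutely continuous with $f_i(0)=0$.
    \item By condition (i), almost everywhere we have $f_1'>f_2'>\cdots>f_{n-1}'>0$. Hence each $f_i$ is strictly increasing, and more importantly each difference $f_i-f_{i+1}$ is increasing.
    \item By condition (ii), $f_{n-1}\to\infty$ and $f_i-f_{i+1}\to\infty$.
\end{itemize}
The goal is to find $t$ with $\norm{f_i(t)}>2^{-n+1}$ for all $i\in[n-1]$. Write $\delta\defeq 2^{-n+1}$.

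The plan is a nested-interval (greedy) argument that handles the runners in the order $n-1, n-2, \dots, 1$, each time keeping a time interval on which all previously handled runners stay far away.

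Base step. Since $f_{n-1}$ is continuous, increasing and unbounded, there is an interval $I_{n-1}=[a,b]$ on which $f_{n-1}$ sweeps exactly the arc $[1/4,3/4]$ modulo $1$. On $I_{n-1}$ we have $\norm{f_{n-1}}\ge 1/4$.

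Inductive step. Suppose we have an interval $I_{k}$ on which $\norm{f_j}\ge 2^{-(n-j)}\cdot c$ for all $j\ge k$, and on which $f_k$ increases by at least some fixed amount, say half a lap. We then want to refine to $I_{k-1}\subseteq I_k$.

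The key point is that $f_{k-1}=f_k+(f_{k-1}-f_k)$, and the bracket is increasing. Therefore on $I_k$ the function $f_{k-1}$ increases by at least as much as $f_k$. In particular, $f_{k-1}$ travels at least as far as $f_k$ does, and its derivative dominates.

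The problem is that innocence alone does not force $f_{k-1}-f_k$ to grow on a \emph{prescribed} interval. So we should not insist on nesting within a fixed window. Instead we will exploit the freedom to move to later times, using that $f_{k-1}-f_k\to\infty$ to create relative phase.

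Halving scheme. At stage $k$ we aim only for a target arc of length $2^{-(n-k)}$ on which $f_k$ is good, and use the halving to absorb the loss. Concretely, within a sub-interval where $f_k$ traverses an arc of length $\ell$, the function $f_{k-1}$ (being faster) traverses an arc of length $\ge\ell$. We then choose the half of that sub-interval on which $f_{k-1}$ sits in a good region. Each step costs a factor of $2$ in arc length, and after $n-1$ steps this yields the margin $2^{-n+1}$. Strictness comes from taking closed sub-arcs slightly inside the open good region.

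More precisely, define good sets $G_k$ as the set of times where $\norm{f_k}>\delta$. A monotone function covering an arc of length $\ell\le1$ with $\ell\ge 4\delta$ spends a sub-interval covering length $\ge \ell/2$ in $G$. The recursion then needs $\ell_{n-1}=1$ and $\ell_{k-1}=\ell_k/2$. We need $\ell_1\ge 4\delta$, i.e.\ $2^{-(n-2)}\ge 2^{-n+3}$, which fails by a factor of $2$. So the bookkeeping must be tighter: we should use that the \emph{first} runner handled gets the whole lap, and we should choose arcs symmetric around $1/2$.

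Main obstacle. I expect the main difficulty to be exactly this tight accounting, together with the issue of whether $f_{k-1}$ covers enough \emph{new} distance on the current interval. If $f_{k-1}$ moves exactly with $f_k$ there, the shift $f_{k-1}-f_k$ could be stuck near a bad value. To handle this I would first try restarting at a later lap: because $f_{k-1}-f_k\to\infty$ continuously and increasingly, we can pick a later lap of $f_{k}$ where the offset $f_{k-1}-f_k$ lies in a favourable range modulo $1$. The tightness (and the sharpness of $2^{-n+1}$) should come out of choosing these offsets dyadically, and the recursion must be organised so that each previously fixed runner's offset is preserved while passing to later times.
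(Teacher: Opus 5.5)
\textbf{There is a genuine gap: the inductive step never closes.}

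Your setup is right. The reduction to a single extremal runner, the relative coordinates $f_i$ with $f_1'>\cdots>f_{n-1}'>0$, and the order of treatment all match what is needed. That order runs from the slowest relative runner ($f_{n-1}$) to the fastest ($f_1$). You also have the key fact: on any time interval, $f_{k-1}$ advances at least as far as $f_k$. But, as you acknowledge, the induction does not close. Your bookkeeping has two concrete errors.
\begin{itemize}
\item Your sub-arc claim is false. If a monotone function sweeps an arc of length $\ell$ centred at $0$, the parts with $\norm{\cdot}>\delta$ are two pieces of length only $\ell/2-\delta$ each, not $\ell/2$.
\item Your terminal requirement $\ell_1\ge 4\delta$ is misplaced. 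The last runner needs only one good time, and for that a sweep of length $2\delta$ suffices (strictly more than $2\delta$ for the strict inequality).
\end{itemize}
Your fallback is to move to later laps so that the offset $f_{k-1}-f_k$ lands in a favourable range while earlier offsets are preserved. This is unsupported: nothing in the hypotheses lets you prescribe that offset modulo $1$ at a time when the earlier runners are still good. It also addresses a non-issue. Where $f_{k-1}$ starts does not matter; only the length of the arc it sweeps does, so a slowly growing $f_{k-1}-f_k$ does no harm.

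\textbf{The missing idea is a lossless exchange of margin for sweep length.} The paper does this in Lemma~\ref{lem:induction}.
\begin{itemize}
\item Anchor at a single, arbitrarily late time $s^*$ at which all handled runners satisfy $\norm{\cdot}\ge\delta$.
\item Open the window $(\alpha,\beta)\ni s^*$ on which the fastest handled runner advances exactly $\delta/2$ before $s^*$ and exactly $\delta/2$ after it.
\item Every handled runner is no faster, so it stays within $<\delta/2$ of its position at $s^*$ and keeps $\norm{\cdot}>\delta/2$.
\item The new runner is strictly faster, so it sweeps an arc of length $>\delta$ in the window. Such an arc cannot fit inside the closed arc $\{\norm{x}\le\delta/2\}$, which has length $\delta$.
\end{itemize}
Start from a time at which the first handled runner sits at $1/2$, and halve the margin $n-2$ times; this gives $>2^{-n+1}$.

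Your nested-interval scheme can also be rescued, but only with the correct accounting. Let the first runner sweep $(\delta,1-\delta)$, so $L_1=1-2\delta$. Let each subsequent runner keep a good sub-arc of length $L_{j+1}=L_j/2-\delta$. Then $L_j=2^{1-j}-2\delta$, and the last runner's requirement $L_{n-2}\ge 2\delta$ is exactly $\delta\le 2^{-n+1}$. Strictness comes from working with open arcs and the strict ordering of the velocities.
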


We note that when $n = 2$, the definition of innocent runners immediately implies the existence of a time $t > 0$ such that two runners are at an antipodal position, which is the best possible. In this case, the distance $2^{-n + 1}$ is still the same, but the strict inequality does not hold since $\norm{\cdot}$ is globally bounded above by $\frac{1}{2}$. To avoid this subtle difficulty in writing the statement of Theorem~\ref{thm:main} that incorporates all cases for $n \geq 2$, we consider only the case $n \geq 3$. 

We also prove that despite the strict inequality in the statement of Theorem~\ref{thm:main}, somewhat surprisingly, the distance $2^{-n + 1}$ is optimal.

\begin{theorem}\label{thm:end-construction}
    For a given integer $n \geq 2$ and a real number $\eta > 0$, there exist $n$ innocent runners such that the following holds. Assume they start the run at the same point. Then for all time $t > 0$, the slowest runner is at a distance less than $2^{-n + 1} + \ve$ from another runner.
\end{theorem}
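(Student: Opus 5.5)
We will build the example by prescribing the \emph{relative} positions $x_j(t)$ of runners $j=2,\dots,n$ with respect to the slowest runner $1$. Since only differences of positions matter, we may take $\nu_1\equiv 1$ (any fixed choice works) and define runner $j$ through $x_j(t)=\int_0^t(\nu_j-\nu_1)$. With $\delta\defeq 2^{-n+1}+\ve$, the task becomes the following: choose locally absolutely continuous functions $0=x_1<x_2<\dots<x_n$ such that
\begin{itemize}
\item[(a)] $x_{j+1}'>x_j'$ almost everywhere, for innocence condition (i);
\item[(b)] $x_{j+1}-x_j\to\infty$, for condition (ii), which together with (a) gives divergence for every pair;
\item[(c)] for every $t>0$ some $j\ge 2$ satisfies $\norm{x_j(t)}<\delta$.
\end{itemize}
We plan to take all $x_j$ piecewise linear, organised into cycles that are repeated forever. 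Each cycle adds at least one extra full lap to every gap $x_{j+1}-x_j$, and this gives (b).

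The basic mechanism is visible for $n=3$, where $\delta=\frac14+\ve$. Runner $2$ is safe whenever $x_2\bmod 1$ lies outside the middle half $[\frac14+\ve,\frac34-\ve]$. While $x_2$ crosses this middle half, we let runner $3$ move in lockstep at $x_3\approx x_2-\frac12 \pmod 1$, slightly faster so that (a) holds. Then $x_3$ sweeps across $(-\frac14-\ve,\frac14+\ve)$ and stays within $\delta$ of an integer. Once $x_2$ is back in its safe zone, runner $2$ crawls at tiny speed while runner $3$ races ahead by a full lap plus $\frac12$. This puts runner $3$ back in the phase $x_3\approx x_2-\frac12$, ready for the next crossing.

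For general $n$ we will iterate this by induction on $n$, with a halving scheme. Runner $2$ alone is close to runner $1$ except on a middle-half window of its own phase. During that window the task passes to runners $3,\dots,n$. These must now guarantee closeness with the sharper threshold coming from the fact that the window itself moves, and one checks that this threshold is half the previous one. Inductively, runner $j$ covers all but a $2^{-(j-1)}$-fraction of the configuration, and the last runner has nothing left uncovered. This is exactly where $2^{-n+1}$ comes from.

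Concretely, we take the $n-1$ runner construction from the induction hypothesis and rescale it in time. We insert a copy of it into each crossing window of runner $2$, shifted in phase by the position of runner $2$. Between windows, the faster runners perform bursts of full laps while the slower ones crawl. The $\ve$ slack absorbs the tiny positive speeds that (a) forces on runners that are meant to be ``parked''.

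The main obstacle is reconciling the monotonicity constraint (a) with the phase-locking. A faster runner can never simply wait for a slower one, so every re-synchronisation must be done by overtaking, that is, by adding whole laps. I must check that these overtaking bursts never take place while the runner in question is responsible for covering. For this I will first write the $n=3$ cycle explicitly with speeds as piecewise constants and verify (c) on each subinterval. Only then will I formalise the inductive insertion, keeping track of which runner is ``on duty'' in each time subinterval.
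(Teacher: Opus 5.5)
Your reformulation, the $n=3$ cycle, and the use of $\ve$ to absorb the forced positive speeds are fine. The general step, however, does not work as described.

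First, for $n\ge 4$ we have $\delta=2^{-n+1}+\ve\le\tfrac18+\ve$. So runner $2$ is within $\delta$ of runner $1$ only on an arc of length $2\delta$ of its phase. The window the others must cover therefore has length $1-2\delta\ge\tfrac34-2\ve$, not $\tfrac12$. The bookkeeping ``runner $j$ covers all but a $2^{-(j-1)}$-fraction'' has no basis, since every runner must be close to the same runner $1$ at the same threshold $\delta$; there is no sharper threshold for the later runners.

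Second, the object you plan to insert, the $(n-1)$-runner construction, only guarantees distance $<2^{-n+2}+\ve$. Rescaling time does not change distances on the circle, so it cannot give $2^{-n+1}$. By Theorem~\ref{thm:relative}, $n-2$ runners moving relative to runner $1$ cannot stay within $2^{-n+2}$ of it at all late times, whatever their phases. So no all-time statement at the needed threshold can serve as your inductive hypothesis.

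Also, shifting the copy by $x_2$ only guarantees that some runner is close to a translate of runner $2$, which says nothing about distance to runner $1$. In your $n=3$ cycle you really use the exact lockstep $x_3\approx x_2-\tfrac12$, not a guarantee inherited from a smaller construction.

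The missing idea is the constraint that actually governs the problem. If runner $k$ stays within $\delta$ of runner $1$ on an interval, it advances by less than $2\delta$ there. By the velocity ordering, every slower runner advances by even less. Hence a runner can resynchronise only while a strictly slower runner is on duty. Your planned check, that bursts avoid the runner's own duty periods, is too weak: during runner $2$'s window, runner $3$ never advances faster than whichever runner is currently on duty.

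The induction must therefore be a finite-horizon statement at the fixed threshold $\delta$, with prescribed entry and exit phases. It says that $m$ runners can keep runner $1$ company while their slowest member advances by just under $P_m$, where $P_1=2\delta$ and $P_m=2P_{m-1}+2\delta$. The recursion comes from three stages: the others cover, then the slowest covers while the others sprint back to their starting phases, then the others cover again. This gives $P_m=(2^{m+1}-2)\delta$.

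Runner $2$ can cross its window, so that the cycle repeats forever, iff $P_{n-2}\ge 1-2\delta$, i.e.\ $\delta\ge 2^{-n+1}$; the $\ve$ supplies the slack for the strict inequalities. This is where the constant really comes from, mirroring the halving in Lemma~\ref{lem:induction}.

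The paper encodes exactly this schedule on the grid $2^{-n}\mathbb{Z}$ through Lemma~\ref{lem:sequence}. There the new runner is added as the \emph{fastest} one and time steps are halved. Conditions (c)--(e) guarantee that whenever a runner is on duty, all slower runners advance by exactly one grid step, while faster runners are free to sprint.
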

By the symmetry, Theorem~\ref{thm:end-construction} also holds for the fastest runner.

We also consider the intermediate runners. The following theorem implies there exists $n$ innocent runners such that the intermediate runners cannot feel loneliness forever.

\begin{theorem}\label{thm:middle-construction}
    For a given integer $n \geq 3$ and a real number $\ve > 0$, there exist $n$ innocent runners such that the following holds. Assume they start the run at the same point. Then for all time $t > 0$ and for every runner who is neither the slowest nor the fastest, there exists another runner at a distance at most $\ve$ from the runner at time $t$. 
\end{theorem}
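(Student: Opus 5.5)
The idea is to prescribe the \emph{gaps} between consecutive runners rather than the velocities themselves, and to let the gaps grow in turn. At any moment only one gap is allowed to grow by a macroscopic amount: it makes one full lap of the circle. Every other gap grows by a summably small amount. Every intermediate runner $j\in\{2,\dots,n-1\}$ has two neighbours, $j-1$ and $j+1$, and the two gaps adjacent to $j$ are never active at the same time. So at every time at least one of these neighbours is within distance $\ve$ of $j$.

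Concretely, set $x_1(t)\equiv 0$ and $x_{j+1}(t)=x_j(t)+d_j(t)$ for $j\in[n-1]$, with $d_j(0)=0$. The velocities are then $\nu_1\equiv 0$ and $\nu_{j+1}=\nu_j+d_j'$. Split $\mathbb{R}_{>0}$ into consecutive unit time intervals $I_k=(k-1,k]$, $k\geq 1$, and declare gap $a(k)\defeq ((k-1)\bmod (n-1))+1$ active on $I_k$. Fix $\delta_k\defeq \frac{\ve}{2}2^{-k}$. On $I_k$ define the derivatives as constants:
\begin{equation*}
d_{a(k)}'\equiv 1, \qquad d_j'\equiv \delta_k \quad (j\neq a(k)).
\end{equation*}
Thus on $I_k$ the active gap increases by exactly $1$ and every other gap increases by $\delta_k$.

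The steps are as follows.
\begin{enumerate}
\item Each $\nu_i$ is piecewise constant and bounded on compacts, hence in $L^1_{\mathrm{loc}}(\mathbb{R}_{>0})$. Every $d_j'$ is strictly positive everywhere, so $\nu_1<\dots<\nu_n$, which is condition~(i).
\item For $i<j$ we have $\int_0^T(\nu_j-\nu_i)\,dt=\sum_{l=i}^{j-1}d_l(T)\ge d_i(T)$. Gap $i$ is active on infinitely many intervals and gains $1$ on each, so $d_i(T)\to\infty$; this is condition~(ii).
\item Fix a gap $j$ and a time $t\in I_k$ with $a(k)\neq j$. Every completed active phase of gap $j$ before $t$ contributed exactly an integer. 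All other contributions are of the form $\delta_m$ with distinct $m\le k$. Hence
\begin{equation*}
d_j(t)\in \mathbb{Z}+\Bigl[0,\sum_{m\ge1}\delta_m\Bigr)=\mathbb{Z}+[0,\ve/2),
\end{equation*}
so $\norm{x_{j+1}(t)-x_j(t)}<\ve/2$.
\item Let $j\in\{2,\dots,n-1\}$ and $t\in I_k$. The gaps $d_{j-1}$ and $d_j$ cannot both equal $d_{a(k)}$, so at least one of them is inactive at time $t$. By step~3, runner $j$ is then within $\ve/2<\ve$ of runner $j-1$ or of runner $j+1$.
\end{enumerate}

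The only delicate point is step~3. There one must check that the fractional part of an inactive gap really stays in $[0,\ve/2)$, which requires the active phases to add \emph{exactly} one lap. The construction with constant slope $1$ on a unit time interval is chosen precisely to guarantee this. The boundary times between phases cause no trouble, since the statement is checked pointwise and the intervals are half-open. Note that the construction uses $n\ge3$ only to have intermediate runners at all.
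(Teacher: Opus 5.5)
Your proposal is correct and is essentially the paper's construction: you prescribe the consecutive gaps and activate exactly one of them at a time, cyclically, so that the two gaps adjacent to an intermediate runner are never active simultaneously. The only difference is how the small drift is handled. The paper keeps everything $(n-1)$-periodic by giving the active gap the value $1-(n-2)\delta$, so that each gap gains exactly $1$ per period; you instead let each active phase add exactly one lap and make the inactive increments $\delta_k$ summable, which works equally well.
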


In Section~\ref{sec:ends}, we establish slightly stronger reformulations of Theorems~\ref{thm:main} and~\ref{thm:end-construction}. These results yield a nonlinear analogue of a classical theorem of Schoenberg~\cite{Schoenberg} regarding billiard ball motions inside the unit cube. We return to this connection in Section~\ref{sec:concluding}.


\section{Extremal runners}\label{sec:ends}

In this section, we prove Theorems~\ref{thm:main} and~\ref{thm:end-construction}, which are for the slowest and the fastest runners.

\subsection{Proof of Theorem~\ref{thm:main}}
By symmetry, we only need to prove the case of the slowest runner. Since the relative position and distance between the runners are invariant under translations, it suffices to prove the following stronger statement, which fixes the slowest runner at the starting point and allows the other runners to start from different initial positions, where the number of runners is $n + 1$. 

\begin{theorem}\label{thm:relative}
    Let $n \geq 2$ be an integer and $\theta_1, \dots, \theta_n \in \mathbb{R}$ be $n$ real numbers.
    Let $\nu_i \in L^1_{\mathrm{loc}}(\mathbb{R}_{> 0})$ be locally integrable function for each $i\in [n]$ such that
    \begin{enumerate}
        \item[$(a)$] $0 < \nu_1 < \nu_2 < \cdots <\nu_n$ almost everywhere on $\mathbb{R}_{> 0}$,
        \item[$(b)$] $\int_0^\infty \nu_1(t) dt = \infty$.
    \end{enumerate}
    Then for all $T > 0$, there exists a real number $s > T$ such that 
    \begin{equation}\label{eq:relative}
        \min\left\{\norm{\theta_i + \int_0^s \nu_i(t) dt}: i\in [n]\right\} > 2^{-n}
    \end{equation}
\end{theorem}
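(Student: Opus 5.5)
The plan is a nested-intervals argument that handles the runners one at a time, from the slowest to the fastest. Write $x_i(s) \defeq \theta_i + F_i(s)$ with $F_i(s) \defeq \int_0^s \nu_i(t)\,dt$. Each $F_i$ is continuous and, by $(a)$, strictly increasing. Let $\delta \defeq 2^{-n}$ and let $B \defeq [-\delta,\delta] + \mathbb{Z}$ be the closed bad arc. The goal is an $s > T$ with every $x_i(s) \notin B$.

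Two facts will be used repeatedly.
\begin{enumerate}
\item[(1)] For any interval $[u,v]$ with $u<v$ and any $k\ge 2$,
\[
F_k(v)-F_k(u) \;>\; F_{k-1}(v)-F_{k-1}(u),
\]
because $\nu_k-\nu_{k-1}>0$ almost everywhere and the interval has positive measure.
\item[(2)] On any time interval, $x_k$ moves monotonically and continuously. Hence if $x_k$ sweeps a set on $\mathbb{R}$ of length $M$, then for any closed sub-interval of that set there is a time sub-interval on which $x_k$ sweeps exactly that sub-interval.
\end{enumerate}

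\emph{Construction.}
\begin{enumerate}
\item[Step 1.] By $(b)$ and continuity, $F_1$ is unbounded on $(T,\infty)$. So there is an interval $I_1 \subset (T,\infty)$ on which $x_1$ sweeps exactly one full good arc, of length $L_1 = 1-2\delta$, lying in the complement of $B$.
\item[Step $k$.] Inductively, suppose we have $I_{k-1}$ on which $x_{k-1}$ sweeps a good arc of length $L_{k-1}$. By (1), on $I_{k-1}$ the runner $x_k$ sweeps an interval of length $M_k > L_{k-1}$; if $M_k \ge 1$, simply take a full good arc as in Step 1. Removing $B$ from an interval of length $M_k < 1$ leaves at most two arcs whose total length is at least $M_k - 2\delta$. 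The longer one therefore has length at least $(M_k-2\delta)/2$. By (2), choose $I_k \subset I_{k-1}$ on which $x_k$ sweeps exactly that arc, so that
\[
L_k \;\ge\; \frac{M_k - 2\delta}{2} \;>\; \frac{L_{k-1}}{2} - \delta .
\]
\end{enumerate}

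\emph{Length count.} The comparison recursion $\ell_1 = 1-2\delta$, $\ell_k = \ell_{k-1}/2 - \delta$ solves to $\ell_k = 2^{1-k} - 2\delta$, so $\ell_n = 0$ exactly when $\delta = 2^{-n}$. This is the borderline case. The strict inequality in (1), propagated through the recursion, gives $L_n > \ell_n = 0$ (and similarly $L_k > 0$ at every stage, so the construction never degenerates). Any $s$ in the nondegenerate interval $I_n \subset I_{n-1} \subset \cdots \subset I_1$ then has $x_k(s) \notin B$ for every $k$, since $I_n \subset I_k$ and $x_k$ avoids $B$ on $I_k$. This gives $\norm{x_k(s)} > 2^{-n}$ for all $k$, with $s > T$.

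\emph{Main obstacle.} The delicate point is obtaining \emph{strict} inequality at the sharp threshold. I would handle it by carrying strictness through the length recursion via (1), and being careful that $B$ is closed, so that the swept good arcs may be taken closed and nondegenerate. A second point needing care is the case where $M_k$ exceeds $1$ or the swept set wraps around the circle. Both are covered by taking a full good arc of length $1-2\delta \ge L_{k-1}$, which only helps the recursion.
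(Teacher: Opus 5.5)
Your argument is correct in substance, but it is organized differently from the paper's.

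\textbf{The paper's route.} The paper proves an inductive lemma with a \emph{halving margin}. Suppose that for every $T$ there is a time $s^*>T$ at which runners $1,\dots,k$ are all at distance at least $\delta$ from $0$. The paper takes the window $(\alpha,\beta)\ni s^*$ on which runner $k$ moves exactly $\delta/2$ in each direction. By the ordering of increments and the triangle inequality, runners $1,\dots,k$ stay at distance $>\delta/2$ on this window. Runner $k+1$ travels more than $\delta$ there, so it must leave the closed arc $[-\delta/2,\delta/2]+\mathbb{Z}$. Starting from $\delta=1/2$ for runner $1$, $n-1$ applications give $>2^{-n}$.

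\textbf{Your route.} You instead fix the final threshold $\delta=2^{-n}$ from the outset and run a single nested-interval construction. You track the length of the good arc swept by runner $k$ through $L_k>L_{k-1}/2-\delta$.

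\textbf{Comparison.} The paper's route is shorter. It needs no count of components and no wrap-around or large-sweep cases, and strictness comes for free from the open window at every step. Yours makes the sharpness of $2^{-n}$ transparent: the comparison solution $\ell_k=2^{1-k}-2\delta$ exhausts the length budget exactly at $k=n$. Moreover, equality at each step (the bad arc cutting the swept interval into two equal halves) is essentially what the construction in Theorem~\ref{thm:relative-end-const} realizes.

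\textbf{Two details need easy repair.}

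First, an interval of length $M_k\ge 1$ need not contain a full good arc. For example, $[1/2,3/2]\setminus B$ consists of two pieces of length $1/2-\delta$, and a full arc is only guaranteed once $M_k\ge 2-2\delta$. Instead, shrink to a subinterval of $I_{k-1}$ on which $x_k$ sweeps a length in $(L_{k-1},1)$. This exists because $L_{k-1}\le 1-2\delta<1$. Then apply your $M_k<1$ count there; the recursion is unchanged.

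Second, a closed arc of length $1-2\delta$ cannot avoid the closed set $B$, so in Step~1 the arc must be open. It is cleanest to take all $I_k$ to be open time intervals and all arcs to be open components of the swept open interval minus $B$, applying your fact (1) to the closure of $I_{k-1}$. Then $L_1=1-2\delta$ exactly, strictness enters at Step~2 via (1), and $L_n>0$ yields a nonempty open interval $I_n$ of good times.
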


From now on, in this section, we fix $n \geq 2$ and relative velocity functions $\nu_1, \dots, \nu_n \in L^1_{\mathrm{loc}}(\mathbb{R}_{> 0})$ that satisfy $(a)$ and $(b)$ in the statements of Theorem~\ref{thm:relative}.
For each $i\in [n]$ and $s \geq 0$, we denote by $d_i(s)$ the quantity $\theta_i + \int_0^s \nu_i(t) dt$. Then, by the fundamental theorem of calculus and the monotonicity, we immediately observe the following.

\begin{observation}\label{obs:distance}
    For each $i\in [n]$, the function $d_i$ is continuous and strictly increasing function with $d_i(0) = \theta_i$ and $\lim_{t\to \infty} d_i(t) = \infty$. Also, for all $0 < r < s$, we have 
    \begin{equation}\label{eq:monotone}
        0 < d_1(s) - d_1(r) < d_2(s) - d_2(r) < \cdots < d_n(s) - d_n(r) < \infty.
    \end{equation}
\end{observation}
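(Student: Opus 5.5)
The plan is to derive every claim in Observation~\ref{obs:distance} from the identity $d_i(s)-d_i(r)=\int_r^s \nu_i(t)\,dt$ for $0\le r<s$, combined with standard facts about Lebesgue integrals. Throughout, I take for granted (as the statement of Theorem~\ref{thm:relative} does when it writes $\int_0^s\nu_i$) that $\nu_i$ is integrable on every bounded interval $(0,s]$. This is what makes $d_i(s)$ a finite real number, with $d_i(0)=\theta_i$ because the integral over an empty interval is zero.

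\textbf{Continuity.} Fix $s_0\ge 0$. For $|s-s_0|\le 1$ we have
\[
|d_i(s)-d_i(s_0)|\le \int_{[\min(s,s_0),\max(s,s_0)]}|\nu_i|.
\]
Absolute continuity of the Lebesgue integral of the integrable function $|\nu_i|\mathbf 1_{(0,s_0+1]}$ makes the right-hand side tend to $0$ as $s\to s_0$. Equivalently, $d_i$ is an indefinite integral of a locally integrable function, so it is absolutely continuous on compact intervals.

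\textbf{Chain of inequalities \eqref{eq:monotone}.} Fix $0<r<s$.
\begin{itemize}
  \item By $(a)$, $\nu_1>0$ almost everywhere, so $\int_r^s\nu_1>0$. This uses that a nonnegative function with zero integral over a set of positive measure (here $s-r>0$) vanishes almost everywhere there, contradicting strict positivity a.e.
  \item For each $i<n$, $(a)$ gives $\nu_{i+1}-\nu_i>0$ almost everywhere, so the same argument yields $\int_r^s(\nu_{i+1}-\nu_i)>0$, that is, $d_{i+1}(s)-d_{i+1}(r)>d_i(s)-d_i(r)$.
  \item Finiteness of $d_n(s)-d_n(r)$ follows from integrability of $\nu_n$ on $[r,s]$.
\end{itemize}
Since every $\nu_i\ge\nu_1>0$ almost everywhere, this chain in particular shows that each $d_i$ is strictly increasing.

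\textbf{Divergence.} Since $\nu_i\ge\nu_1$ almost everywhere, $d_i(s)\ge \theta_i+\int_0^s\nu_1$. By $(b)$ the right-hand side tends to $\infty$; this uses monotone convergence, since $\nu_1\ge0$. Hence $d_i(s)\to\infty$.

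No step is a genuine obstacle. The only point needing care is the measure-theoretic fact that a strictly positive a.e. function has strictly positive integral over an interval of positive length; it follows from $\{\nu>0\}=\bigcup_k\{\nu>1/k\}$ and countable subadditivity.
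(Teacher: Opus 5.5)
Your proof is correct and takes the same route as the paper, which simply invokes the fundamental theorem of calculus together with the ordering $0<\nu_1<\cdots<\nu_n$ a.e.\ and condition $(b)$; you supply the measure-theoretic details the paper omits (absolute continuity of the integral, positivity of the integral of an a.e.\ positive function over an interval of positive length, and monotone convergence). You are also right to state explicitly that $\nu_i$ must be integrable near $0$, which the paper uses implicitly: membership in $L^1_{\mathrm{loc}}(\mathbb{R}_{>0})$ alone does not ensure this (e.g.\ $\nu_1(t)=1/t$), and without it $d_i(s)$ need not be finite, nor need $d_i(0)=\theta_i$ hold with continuity at $0$.
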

Keeping this observation in our mind, we prove the following lemma.

\begin{lemma}\label{lem:induction}
    Let $\delta \in (0, 1/2]$ be real number and $k \in [n-1]$. Assume for all $T > 0$, there exists a real number $s_k > T$ such that
    \begin{equation}\label{eq:assump}
        \min\left\{\norm{d_i(s_k)}: i\in [k]\right\} \geq \delta.
    \end{equation}
    Then there exists a real number $s_{k+1} > T$ such that
    \begin{equation*}
        \min\left\{\norm{d_i(s_{k+1})}: i\in [k+1]\right\} > \frac{\delta}{2}.
    \end{equation*}
\end{lemma}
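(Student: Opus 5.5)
We start from a good time for the first $k$ runners and move runner $k+1$ a short way, forward or backward in time, until it is far enough from the nearest integer. Since $d_1,\dots,d_{k+1}$ are continuous and strictly increasing (Observation~\ref{obs:distance}), we may do this without moving runners $1,\dots,k$ by as much as $\delta/2$. The key input is \eqref{eq:monotone}: over any time interval, runner $k+1$ is displaced strictly more than each of runners $1,\dots,k$.

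\textbf{Choosing the starting time.} Fix $T>0$. Since $d_{k+1}$ is continuous, strictly increasing and unbounded, there is $T'>T$ with $d_{k+1}(T')-d_{k+1}(T)>1$. Apply the hypothesis with $T'$ to obtain $s_k>T'$ satisfying \eqref{eq:assump}. Write $d_{k+1}(s_k)=m+y$ with $m\in\mathbb{Z}$ and $y\in[-1/2,1/2)$.
\begin{itemize}
\item If $|y|>\delta/2$, take $s_{k+1}=s_k$ and we are done.
\item Otherwise $|y|\le\delta/2$. We move in the direction that pushes runner $k+1$ away from $m$: forward in time if $y\ge 0$, backward if $y<0$ (for $y=0$ either direction works).
\end{itemize}

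\textbf{The step.} Consider the forward case; the backward case is symmetric. Let $u>s_k$ be the time with $d_{k+1}(u)-d_{k+1}(s_k)=\delta/2$, which exists by continuity and unboundedness. By \eqref{eq:monotone}, for $i\le k$ the displacement $b_i\defeq d_i(u)-d_i(s_k)$ satisfies $0<b_i<\delta/2$.

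Moving a point on $\mathbb{R}/\mathbb{Z}$ by $b$ changes $\norm{\cdot}$ by at most $b$. Hence
\[
\norm{d_i(u)}\ge \delta-b_i>\delta/2 \quad\text{for } i\le k .
\]
At time $u$ the offset of runner $k+1$ is $y+\delta/2\in[\delta/2,\delta]$, and $\delta\le 1/2$. Therefore $\norm{d_{k+1}(u)}=y+\delta/2$, which is $>\delta/2$ as soon as $y>0$.

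\textbf{The boundary case $y=0$.} This is the one delicate point. Here runner $k+1$ reaches distance exactly $\delta/2$ at time $u$, so we perturb $u$ slightly further to $u'>u$. For $i\le k$ the inequalities $b_i<\delta/2$ are strict, so by continuity of the $d_i$ they persist for $u'$ close enough to $u$. Meanwhile runner $k+1$'s offset becomes slightly larger than $\delta/2$, and still less than $1/2$. Taking $s_{k+1}=u'$ then gives strict inequalities for all $i\le k+1$.

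\textbf{Staying after time $T$.} In the backward case we use the time $u<s_k$ with $d_{k+1}(s_k)-d_{k+1}(u)=\delta/2$ (or slightly more when $y=0$), and the same estimates hold with the signs reversed. Since $d_{k+1}(s_k)-d_{k+1}(T)\ge d_{k+1}(T')-d_{k+1}(T)>1>\delta$, this $u$ lies in $(T,s_k)$. In the forward case $u>s_k>T$ trivially, so in either case $s_{k+1}>T$.
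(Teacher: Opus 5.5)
Your proof is correct and rests on the same mechanism as the paper's. You start from a $\delta$-good time for runners $1,\dots,k$ and spend a displacement budget of less than $\delta/2$ for those runners, while \eqref{eq:monotone} guarantees that runner $k+1$ moves strictly more. The difference is only in the bookkeeping. The paper takes a two-sided window in which runner $k$ moves $\delta/2$ in each direction, so runner $k+1$ sweeps an arc longer than $\delta$ and must leave the closed $\delta/2$-neighbourhood of $0$; this avoids any case analysis but locates $s_{k+1}$ only implicitly. You instead move runner $k+1$ directly by $\delta/2$ away from its nearest integer, which costs you the sign split and the $y=0$ perturbation but yields $s_{k+1}$ explicitly.
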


\begin{proof}[Proof of Lemma~\ref{lem:induction}] 
    For a given $T$, there exists $s_k > 0$ that satisfies \eqref{eq:assump}.
    We choose $t_0 > 0$ such that 
    $$
        d_i(t_0 + s_k) - d_i(s_k) > 1
    $$
    for all $i\in [n]$. Then by the assumption, there exists a real number $s^*_k > t_0 + s_k$ such that
     \begin{equation*}
        \min\left\{\norm{d_i(s_k)}: i\in [k]\right\} \geq \delta
    \end{equation*}
    holds.

    Let $0 < T < s_k < \alpha, s_k + t_0 < s^*_k < \beta$ be real numbers such that $d_k(s^*_k) - d_k(\alpha) = \frac{\delta}{2}$ and $d_k(\beta) - d_k(s^*_k) = \frac{\delta}{2}$. The existence of such real numbers $\alpha$ and $\beta$ is guaranteed by Observation~\ref{obs:distance}. 
    More precisely, the existence of $\alpha$ follows from the intermediate value theorem, since the function $f_k(x)\defeq d_k(x)-d_k(s^*_k)$ is continuous and satisfies 
    $$
        f_k(s_k) = - \int_{s_k}^{s^*_k} \nu_i(t)dt < - \int_{s_k}^{s_k + t_0} \nu_i(t)dt < -1
    $$ and $f_k(s^*_k)=0$. 
    Similarly, $\beta$ exists by the intermediate value theorem as $f_k(s^*_k) = 0$ and $\lim_{t\to \infty} f_k(t) = \lim_{t\to \infty} d_k(t) - d_k(s^*_k) = \infty$.  
    
    Then by \eqref{eq:monotone}, for all $i\in [k]$ and all real $t$ in the open interval $(\alpha, \beta)$, we have
    \begin{equation}\label{eq:diff}
        |d_i(t) - d_i(s^*_k)| < \frac{\delta}{2}.
    \end{equation}
    Since $\norm{\cdot}$ satisfies the triangle inequality, the assumption in the statement with \eqref{eq:diff} implies that for all $t\in (\alpha, \beta)$, it holds that
    \begin{equation}\label{eq:open}
        \min\left\{\norm{d_i(t)}: i\in [k]\right\} > \frac{\delta}{2}.
    \end{equation}

    We also have that
    \begin{equation*}
        \delta = d_k(\beta) - d_k(\alpha) < d_{k+1}(\beta) - d_{k+1}(\alpha).
    \end{equation*}
    Since $d_{k+1}$ is a continuous function, there exists a real number $\ve > 0$ such that 
    \begin{equation}\label{eq:adjust}
        d_{k+1}(\beta - \ve) - d_{k+1}(\alpha + \ve) > \delta.
    \end{equation}
    Again, as $d_{k+1}$ is a strictly increasing continuous function and $\norm{\cdot}$ satisfies the triangle inequality, \eqref{eq:adjust} implies that there exists $s_{k+1} \in [\alpha + \ve, \beta - \ve]$ such that 
    \begin{equation}\label{eq:k+1}
        \norm{d_{k+1}(s_{k+1})} > \frac{\delta}{2}.
    \end{equation}
    Since $s_{k+1}\in [\alpha + \ve, \beta - \ve] \subseteq (\alpha, \beta)$ and $\alpha > T$, the inequalities \eqref{eq:open} and \eqref{eq:k+1} implies $s_{k+1}$ is the desired real number.
    This completes the proof.
\end{proof}

Observe that the assumption in Theorem~\ref{thm:main} allows the equality case in the inequality, which allows us to iteratively apply Lemma~\ref{lem:induction} from the base case $k = 1$.

\begin{proof}[Proof of Theorem~\ref{thm:relative}]
    From Observation~\ref{obs:distance}, the function $d_1$ is continuous and $d_1(0) = \theta_1 < \infty$ and $\lim_{t\to \infty} d_1(t) = \infty$. Thus, by the intermediate value theorem, for all $T > 0$ there exists $s_1 > T$ such that $d_1(s_1) = \frac{1}{2}$. This implies that
    \begin{equation*}
        \min\left\{\norm{d_i(s_1)}: i\in [1]\right\} \geq \frac{1}{2}
    \end{equation*}
    holds.
    Hence, by iteratively applying Lemma~\ref{lem:induction} with $n-1$ times, we obtain that there exists $s > T$ such that
    \begin{equation*}
        \min\left\{\norm{d_i(s)}: i\in [n]\right\} > 2^{-n}.
    \end{equation*}
    This completes the proof.
\end{proof}


\subsection{Proof of Theorem~\ref{thm:end-construction}}

Similar to the proof of Theorem~\ref{thm:main}, by symmetry and the translation invariance of relative positions, it suffices to prove the following theorem, which fixes the slowest runner.

\begin{theorem}\label{thm:relative-end-const}
    Let $n \geq 1$ be an integer and $\ve > 0$ be a real number. Then for each $i\in [n]$, there exists function $\nu_i \in L^1_{\mathrm{loc}}(\mathbb{R}_{> 0})$ such that
    \begin{enumerate}
        \item[$(a)$] $0 < \nu_1 < \nu_2 < \cdots <\nu_n$ almost everywhere on $\mathbb{R}_{> 0}$,
        \item[$(b)$] $\int_0^\infty \nu_1(t) dt = \infty$ and $\int_0^{\infty} (\nu_j(t) - \nu_i(t))dt = \infty$ for all $1 \leq i < j \leq n$,
        \item[$(c)$] $\sup_{s > 0}\min\left\{\norm{\int_0^s \nu_i(t) dt}: i\in [n]\right\} < 2^{-n} + \ve$.
    \end{enumerate}
\end{theorem}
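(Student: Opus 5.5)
The plan is to build the positions $d_i(s)=\int_0^s\nu_i$ directly, as continuous strictly increasing piecewise-linear functions, and then take $\nu_i=d_i'$. The construction is meant to show that every step of the proof of Lemma~\ref{lem:induction} can be made essentially tight. In that proof, the factor $1/2$ is lost exactly when runner $k+1$ covers only slightly more distance than runner $k$ during the window $(\alpha,\beta)$ in which runners $1,\dots,k$ stay $\delta/2$-far from $0$. So I would make the faster runners move only slightly faster than runner $1$ most of the time, with phases chosen to block every window. To guarantee $(b)$, I would add rare "bursts" during which $\nu_{j}-\nu_i$ is large. Each burst is placed in a short time interval where runner $1$ sits within $\ve$ of $0$, so that $(c)$ holds trivially there.

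\textbf{Step 1 (reduction to a covering problem on the torus).} Encode the run as a monotone path $\gamma(s)=(d_1(s),\dots,d_n(s))$ in $(\mathbb{R}/\mathbb{Z})^n$, with increments ordered as in \eqref{eq:monotone}. Condition $(c)$ says that $\gamma$ never enters the open box $B=\{x:\ \|x_i\|\ge 2^{-n}+\ve \text{ for all } i\}$. I would build $\gamma$ recursively in $n$ through nested cycles. During one lap of runner $1$, runner $1$ itself covers only the part of the lap within $2^{-n}+\ve$ of $0$. On the complementary arc $J_1$, of length about $1-2^{-n+1}$, runners $2,\dots,n$ must do the covering.

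\textbf{Step 2 (recursive blocking).} Inductively, I would arrange the motion on $J_1$ as follows. Runner $1$ advances slowly, by an amount $a_1$. Runner $2$ advances by only slightly more than $a_1$, and is phased so that it sits in its $2^{-n}$-window for as long as possible. The time runner $2$ spends outside that window is then covered by runner $3$, and so on. Each runner's uncovered stretch is at most half the previous one, which is consistent with the chain $1/2,1/4,\dots,2^{-n}$ from Lemma~\ref{lem:induction}. Concretely, I would define the pattern for $n$ runners from the pattern for $n-1$ runners on a time-rescaled copy, with the new runner inserted in the role of the new slowest one and all velocities multiplied by factors $1+\eta$ so that they stay strictly ordered. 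Because the pattern repeats, the estimate only has to be checked on one period.

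\textbf{Step 3 (divergence and regularity).} Between periods I would insert the burst intervals described above, with $\nu_j-\nu_i\ge c>0$ there, so that $\int_0^\infty(\nu_j-\nu_i)=\infty$. During a burst, runner $1$ is parked within $\ve$ of $0$; it moves slowly but with positive speed, so it keeps the covering. A burst changes the phases of the faster runners, so I would choose the burst lengths so that each runner $j$ gains an \emph{integer} amount over runner $1$. The phases are then restored and the periodic pattern can resume. Every $\nu_i$ is a positive step function, so it lies in $L^1_{\mathrm{loc}}$, and $(a)$ and $\int_0^\infty\nu_1=\infty$ are immediate.

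\textbf{Main obstacle.} The hard part is Step 2: verifying that the nested phases really leave no point of the box $B$ uncovered, uniformly up to the slack $\ve$. This must hold even though each velocity ratio is only $1+\eta$, which means the relative phases drift within a period. I would first try to make all drifts smaller than $\ve/4$ by taking $\eta$ small relative to the number of sub-phases. After that, the covering becomes a purely combinatorial check of dyadic intervals, which I would handle by induction on $n$.
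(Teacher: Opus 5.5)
There is a genuine gap. Step~2 is the whole content of the theorem, you leave it unverified, and the mechanism you describe for it cannot work once $n\ge 3$.

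As you set things up, outside the bursts every velocity ratio is at most $1+\eta$, and bursts happen only while runner~$1$ is within $\ve$ of $0$. So while runner~$1$ crosses its bad arc $\{\norm{x}\ge 2^{-n}+\ve\}$, which has length $L=1-2^{-n+1}-2\ve$, each runner $j\ge 2$ travels less than one full lap. A monotone path of length less than $1$ spends at most length $2(2^{-n}+\ve)$ inside the window $\{\norm{x}<2^{-n}+\ve\}$. Since $\nu_1\le\nu_j$, runner~$1$ advances at most that much while runner~$j$ is in its window. Hence runners $2,\dots,n$ together cover at most $(n-1)\cdot 2(2^{-n}+\ve)$ of runner~$1$'s progress across $L$. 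Covering all of $L$ would require $n2^{-n+1}+2n\ve\ge 1$, which fails for $n\ge 3$ and $\ve<1/(8n)$; for $n=3$ you would need $1/2+4\ve\ge 3/4-2\ve$. Your near-lockstep picture is exactly right for $n=2$, with runner~$2$ half a lap ahead of runner~$1$. For larger $n$, shrinking the drifts below $\ve/4$ only makes the obstruction worse.

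The missing idea is that faster runners must be repositioned by large jumps (many laps plus a prescribed fraction) whenever \emph{any} slower runner sits in its window, at every level, not only when runner~$1$ is near $0$. This is what Lemma~\ref{lem:sequence} organizes. At integer times the positions are the grid points $2^{-n}u^{(\ell)}$. On $[\ell,\ell+1)$ the runner $c=c(u^{(\ell)})$ sits at $0$ or $-2^{-n}$ and advances exactly $2^{-n}$, so it stays within $2^{-n}$ of $0$. Runners $1,\dots,c$ move in lockstep with it. Each runner $i>c$ advances $5i$ plus the grid increment, so it jumps to whatever grid point the schedule needs while staying faster.

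The recursion doubles the $(n-1)$-schedule in time and adds the new runner as the \emph{fastest}. This new runner covers exactly when the old covering runner is between distance $2^{-n}$ and $2^{-n+1}$ from $0$, and it ends up covering half of all time. Inserting the new runner as the slowest, as you propose, goes the wrong way. The slowest runner is the one that can never be repositioned relative to the others. Moreover, reusing the $(n-1)$-pattern for runners $2,\dots,n$ only certifies radius $2^{-n+1}+\ve$, not $2^{-n}+\ve$.

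Your Step~3 is fine. It matches the paper's last interval $[2^n-1,2^n)$, where the shifts $k_i$ make all positions integers so the pattern repeats. Your $(1+\eta)$ factors likewise match the rescaled $\hat\sigma_i$ that make the order strict.
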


To prove Theorem~\ref{thm:relative-end-const}, we first divide the unit circle into $2^n$ equal arcs. Rather than constructing the velocity functions $\nu_i$ directly, we prescribe suitable positions for the runners so that, at each integer time, every runner lies at one of the subdivision points of the circle. We then smoothly interpolate between these prescribed positions, differentiate the resulting position functions, and apply a small perturbation to obtain the desired velocity functions. To this end, we prove Lemma~\ref{lem:sequence}, which will be used to determine the position of each runner at every integer time.

For a given integer $n \geq 1$ and an $n$-dimensional vector $u \in \left(\mathbb{Z}_{2^n}\right)^n$, denote $c(u)$ the smallest coordinate such that $(u)_{c(u)} \in \{0, -1\}$. If such a coordinate does not exist, then we set $c(u) = \infty$.

\begin{lemma}\label{lem:sequence}
    Let $n \geq 1$ be an integer.
    Then there exists a sequence of vectors $\{u^{(i)}\}_{i=0}^{2^n - 1}$ of $\left(\mathbb{Z}_{2^n}\right)^n$ such that for all $0\leq i \leq 2^n - 1$, the following holds.
    \begin{enumerate}
        \item[$(a)$] $\left(u^{(i)}\right)_1 = i$ and $u^{(0)}$ is a $0$-vector,
        \item[$(b)$] $c\left(u^{(i)}\right) < \infty$,
        \item[$(c)$] $\left(u^{(i+1)}\right)_j = \left(u^{(i)}\right)_j + 1$ for all $j \in [c\left(u^{(i)}\right)]$. 
    \end{enumerate}
\end{lemma}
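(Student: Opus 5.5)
We construct the sequence recursively, reading condition $(c)$ as follows. At each step $i\to i+1$, every coordinate $j\le c(u^{(i)})$ is forced to increase by $1$, while every coordinate $j>c(u^{(i)})$ may be \emph{reset} to an arbitrary value of $\mathbb{Z}_{2^n}$. The first coordinate is forced to be $(u^{(i)})_1=i$. It lies in $\{0,-1\}$ exactly at $i=0$ and $i=2^n-1$, so $c(u^{(0)})=c(u^{(2^n-1)})=1$. Condition $(c)$ for $j=1$ is automatic for $0\le i\le 2^n-2$; the step out of $i=2^n-1$ is not required, since $u^{(2^n)}$ does not exist. It remains to treat the times $1,\dots,2^n-2$, a run of $2^n-2$ consecutive times, using coordinates $2,\dots,n$. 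At the start of this run, coordinates $2,\dots,n$ are all free, because $c(u^{(0)})=1$.

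\textbf{Inductive claim $P(k)$.} Let $k\ge 0$, let $I$ be a run of $2^{k+1}-2$ consecutive times, and let $m+1,\dots,m+k$ be $k$ coordinates. Assume that at every time of $I$ the coordinates $1,\dots,m$ are not in $\{0,-1\}$, and that coordinates $m+1,\dots,m+k$ are free at the first time of $I$. Then we can assign values to coordinates $m+1,\dots,m+k$ on $I$ such that:
\begin{enumerate}
\item[(1)] at every time of $I$, some coordinate among $m+1,\dots,m+k$ lies in $\{0,-1\}$;
\item[(2)] every such coordinate $j$ increases by $1$ at each step inside $I$ with $c\ge j$;
\item[(3)] values of coordinates $j>c$ may be reset arbitrarily.
\end{enumerate}
For $k=0$ the run is empty, so there is nothing to prove.

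\textbf{Inductive step.} Write $L=2^{k+1}-2$. Split $I$ into a left piece $I_1$ of length $2^k-2$, two middle times, and a right piece $I_2$ of length $2^k-2$. At the first time of $I$, set coordinate $m+1$ to the value $-(2^k-1)$, and let it increase by $1$ at every step of $I$. It does have to increase at every step: at every time of $I$, coordinates $\le m$ are not bad by hypothesis, so $c\ge m+1$ throughout $I$. Its values then run through $-(2^k-1),\dots,2^k-2$. These are $L<2^n$ consecutive residues, so there is no wraparound, and they lie in $\{0,-1\}$ exactly at the two middle times.

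On $I_1$, coordinate $m+1$ is never bad, so apply $P(k-1)$ to coordinates $m+2,\dots,m+k$; they are free at the start of $I_1$ because it coincides with the start of $I$. At the two middle times, $c=m+1$. Hence at the step from the second middle time to the first time of $I_2$, coordinates $m+2,\dots,m+k$ are free again, and we apply $P(k-1)$ to them on $I_2$. At the step between the two middle times, only coordinate $m+1$ is constrained, and it does increase, going from $-1$ to $0$. At the step from the last time of $I_1$ to the first middle time, the coordinates $>m+1$ need not be tracked any further, since they are free afterwards. This gives the recursion $T(k)=2+2T(k-1)$ with $T(0)=0$, hence $T(k)=2^{k+1}-2$, which matches the length of $I$.

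Applying $P(n-1)$ with $m=1$ to the run $1,\dots,2^n-2$ gives $(b)$ at every time and $(c)$ at every step. Coordinates not otherwise determined (for example the tail of $u^{(0)}$) are set to $0$, which makes $u^{(0)}$ the zero vector and gives $(a)$. The main obstacle is the bookkeeping that the forced increments are consistent. One must check that whenever a coordinate $j$ is assigned by the recursion, every step where $c\ge j$ is indeed an increment step of the arithmetic progression we chose. One must also check that the only resets happen at steps where $c<j$. Both follow from the nesting: a coordinate's run lies strictly inside a region where all smaller coordinates are non-bad, and its run begins immediately after a time at which some smaller coordinate is bad, or at time $1$.
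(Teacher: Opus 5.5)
Your construction is correct, but it is organised differently from the paper's. The paper inducts on $n$ from the bottom up. From the $(n-1)$-sequence $w^{(i)}$ it sets $u^{(2i-1)}=(2w^{(i)}-1,\pm1)$ and $u^{(2i)}=(2w^{(i)},0\text{ or }2)$: it doubles time and all old coordinates and appends a new \emph{finest} coordinate. To push (b) and (c) through, it carries two extra invariants: every coordinate of $u^{(i)}$ has the parity of $i$, and every odd step adds $(1,\dots,1)$.

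You work top-down on the time axis instead. The \emph{coarsest} remaining coordinate runs through an arithmetic progression centred so that it is bad exactly at the two middle times, and you recurse on the two halves. The count $T(k)=2+2T(k-1)=2^{k+1}-2$ explains why $n$ coordinates cover exactly $2^n$ times. Your route needs no parity invariant and gives an explicit description: on each of its blocks, coordinate $j$ is the progression starting at $-(2^{n-j+1}-1)$. In small cases it even yields the paper's sequence; for $n=3$ you recover Table~\ref{tab:example} once the single free entry $(u^{(4)})_3$ is chosen to be $2$. The price is boundary bookkeeping, which the paper's closed formula handles automatically.

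That bookkeeping is the one place to tighten. Entry steps are fine, but a coordinate is still constrained at the \emph{exit} step of its run:
\begin{itemize}
\item At the last time of $I_1$ one has $c\ge m+2$, so (c) forces coordinates $m+2,\dots,c$ to increase into the first middle time. They become free only at the following step.
\item Coordinate $m+1$ must equal $2^k-1$ at the time right after $I$.
\item $u^{(2^n-1)}$ must agree with $u^{(2^n-2)}+(1,\dots,1)$ in coordinates $1,\dots,c(u^{(2^n-2)})$, so you may not fill its undetermined coordinates with $0$.
\end{itemize}
The fix is to continue each progression one step past the end of its run. The time just after a run of coordinate $j$ is either a middle time at a higher level of the recursion or time $2^n-1$. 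So some smaller coordinate is bad there, that time lies in no run of $j$, and the extension conflicts with nothing. State this exit fact next to your entry fact in the nesting remark, and the proof is complete.
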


For the examples of the sequence of vectors of Lemma~\ref{lem:sequence} in the cases $n = 1, 2, 3$, we refer the reader to Table~\ref{tab:example}.

\begin{table}[H]
    \centering
    \begin{tabular}{c|l}
        \toprule
        $n$ & \multicolumn{1}{c}{Sequence of vectors} \\
        \midrule
        $1$ & $(0), (1)$ \\
        \midrule
        $2$ & $(0, 0), (1, 3), (2, 0), (3, 1)$ \\
        \midrule
        $3$ & $(0, 0, 0), (1, 5, 7), (2, 6, 0), (3, 7, 1),
        (4, 0, 2), (5, 1, 7), (6, 2, 0), (7, 3, 1)$ \\
        \bottomrule
    \end{tabular}
    \caption{Examples of the sequences in Lemma~\ref{lem:sequence}.}
    \label{tab:example}
\end{table}

\begin{proof}[Proof of Lemma~\ref{lem:sequence}]
    We use induction on $n$. To proceed the induction properly, we consider two additional conditions, namely 
    \begin{enumerate}
        \item[$(d)$] all the coordinate values of the vector $u^{(i)}$ have the same parity with $i$.
        \item[$(e)$] If $i$ is odd which is not $2^n - 1$, then $u^{(i+1)} - u^{(i)} = (1, \dots, 1)$.
    \end{enumerate}     
    
    The base case $n = 1$ is trivial with the sequence $(0), (1)$. We now assume $n \geq 2$, and there exists a sequence of vectors $\{w^{(i)}\}_{i=0}^{2^{n-1} - 1}$ of $\left(\mathbb{Z}_{2^{n-1}}\right)^{n-1}$ that satisfies all conditions $(a)$ to $(e)$ for $n-1$. 
    From now on, for each $0 \leq i \leq 2^{n-1}-1$, we identify each coordinate of $w^{(i)}$ with its unique representative among the nonnegative integers at most $2^{n-1}-1$. 
    
    We now construct the sequence $\{u^{(i)}\}_{i=0}^{2^n-1}$ as follows.
    For each $0 \leq i \leq 2^{n-1}-1$, we define $u^{(2i)}$ as follows. For each $j\in [n]$,
    \begin{equation*}
        \left(u^{(2i)}\right)_j \defeq 
        \begin{cases}
            2\cdot \left(w^{(i)}\right)_j & \text{if } j\in [n-1],\\
            0 & \text{if } j = n \text{ and $i$ is odd},\\
            2 & \text{otherwise}.
        \end{cases}
    \end{equation*}
    For odd cases, we define $u^{(2i - 1)}$ as follows. For each $i\in [2^{n-1} - 1]$ and $j\in [n]$,
    \begin{equation*}
        \left(u^{(2i-1)}\right)_j \defeq 
        \begin{cases}
            2\cdot \left(w^{(i)}\right)_j - 1 & \text{if } j\in [n-1],\\
            -1 & \text{if } j = n \text{ and $i$ is odd},\\
            1 & \text{otherwise}.
        \end{cases}
    \end{equation*}
    Lastly, we set $u^{(2^n - 1)}$ as $u^{(2^n-2)} + (1, \dots, 1)$. 

    We now claim that $\{u^{(i)}\}_{i=0}^{2^n-1}$ defined as above is the desired sequence. Fix $0 \leq \ell \leq 2^{n}-1$ and denote $u = u^{(\ell)}$. From the construction and the induction hypothesis, conditions $(a)$, $(d)$, and $(e)$ are immediately satisfied. Obviously $u^{(0)}$ and $u^{(2^{n}-1)}$ satisfy $(b)$ and $(c)$ as well, we may assume $\ell \in [2^{n}-2]$.

    We now prove $(b)$ is satisfied.
    Assume $\ell$ is even with $\ell = 2 i$. Then if $i$ is odd, the construction implies that $(u)_n = 0$, hence $c\left( u \right) \leq n < \infty$. If $i$ is even, then by the induction hypothesis with $(d)$, the vector $w^{(i)}$ has $0$ at $c\left(w^{(i)} \right)$-th ($\leq (n-1)$-th) coordinate. This implies $u$'s $c\left(w^{(i)} \right)$-th coordinate is also $0$, which means $c(u) = c\left(w^{(i)} \right) \leq n-1 < \infty$.
    Thus, now we may assume $\ell$ is odd with $\ell = 2i - 1$ for some $i\in [2^{n-1}-1]$.
    If $i$ is odd, then the construction of $u$ provides $(u)_n = -1$, which implies $c(u) \leq n < \infty$. If $i$ is even, then by the induction hypothesis, there exists $c \in [n-1]$ such that $\left( w^{(i)} \right)_c = 0$. Hence, by the construction above, we have $(u)_c = 2\cdot 0 -1 = -1$. This implies $c(u) \leq c < \infty$, which proves $(b)$.

    It remains to prove that $(c)$ holds. If $\ell$ is odd, then by our construction, $u^{(\ell + 1)} - u^{(\ell)} = (1, \dots, 1)$, which directly implies $(c)$. Thus, we may assume $\ell$ is even with $\ell = 2i$ for some $i\in [2^{n-1}-1]$.
    We divide the case into two, depending on the parity of $i$. We first consider the case $i$ is even. If $i$ is even, then $(u)_c = 2\cdot \left(w^{(i)} \right)_c = 0$, where $c\defeq c\left(w^{(i)} \right) \in [n-1]$. Also, by the definition of $c(\cdot)$, we have $c = c(u)$. Then for all $j\in [c]$, by our constuction, we have
    \begin{equation*}
        \left(u^{(\ell+1)}\right)_j - \left(u^{(\ell)}\right)_j = 2\cdot \left( \left( w^{(i+1)}\right)_j - \left( w^{(i)} \right)_j \right) - 1 = 1,
    \end{equation*}
    which implies $(c)$ holds for this case. So we now assume $i$ is odd. Then by our construction, we have 
    \begin{equation*}
        \left( u^{(\ell+1)}\right)_n - \left( u^{(\ell)} \right)_n = 1 - 0 = 1.
    \end{equation*}
    Moreover, by the induction hypothesis and condition $(e)$, for each $j\in [n-1]$, we have
    \begin{equation*}
        \left( u^{(\ell+1)}\right)_j - \left( u^{(\ell)} \right)_j = 2\cdot \left( \left( w^{(i+1)}\right)_j - \left( w^{(i)} \right)_j \right) - 1 = 1,
    \end{equation*}
    which proves $(c)$.
    This completes the proof.
\end{proof}    

We are now ready to prove Theorem~\ref{thm:relative-end-const}.

\begin{proof}[Proof of Theorem~\ref{thm:relative-end-const}]
    Let $\{u^{(i)}\}_{i=0}^{2^n-1}$ be the sequence of vectors of $\left(\mathbb{Z}_{2^n}\right)^n$ that satisfies all conditions $(a)$, $(b)$, and $(c)$ of Lemma~\ref{lem:sequence}. We identify each coordinate of $u^{(i)}$ with its unique representative among the nonnegative integers at most $2^{n}-1$.
    For each $0\leq i \leq 2^n - 1$, let $w^{(i)} \defeq 2^{-n}\cdot u^{(i)}$. We now define a real number $\zeta_{i, \ell} > 0$ for each $i\in [n]$ and integer $0 \leq \ell \leq 2^{n}-2$ as 
    \begin{equation*}
        \zeta_{i, \ell} \defeq 
        \begin{cases}
            2^{-n} & \text{if } i \leq c\left( u^{(\ell)} \right),\\
            5i + \left( w^{(\ell+1)} \right)_i - \left( w^{(\ell)} \right)_i & \text{otherwise}.
        \end{cases}
    \end{equation*}
    We now define a function $\sigma_i$ on the interval $[0, 2^n-1)$ for each $i\in [n]$ as
    \begin{equation*}
        \sigma_i \defeq \sum_{\ell = 0}^{2^n -2} \zeta_{i, \ell} \cdot \mathbf{1}_{[\ell, \ell+1)},
    \end{equation*}
    where $\mathbf{1}$ is the characteristic function.

    \begin{claim}\label{clm:sigma-monotone}
        $0 < \sigma_1 \leq \cdots \leq \sigma_n$.
    \end{claim}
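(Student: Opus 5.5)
Since each $\sigma_i$ is a step function that is constant on every interval $[\ell,\ell+1)$ with value $\zeta_{i,\ell}$, the claim is equivalent to the inequalities
\[
0<\zeta_{1,\ell}\le \zeta_{2,\ell}\le\cdots\le\zeta_{n,\ell}
\]
for each fixed integer $0\le \ell\le 2^n-2$. The plan is therefore to fix $\ell$, write $c\defeq c\bigl(u^{(\ell)}\bigr)$, and compare consecutive values $\zeta_{i,\ell}$ and $\zeta_{i+1,\ell}$ according to where $i$ sits relative to $c$. By condition $(b)$ of Lemma~\ref{lem:sequence}, $c\in[n]$ is finite, and $c\ge 1$, so the index $i=1$ always falls in the first case of the definition. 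Hence $\zeta_{1,\ell}=2^{-n}>0$, which gives positivity.

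First, the two values of $\zeta$ need bounds. For $i\le c$ we have $\zeta_{i,\ell}=2^{-n}$, so the indices $1,\dots,c$ contribute a constant block and the inequalities inside it hold with equality. For $i>c$, each coordinate of $w^{(\ell)}$ and $w^{(\ell+1)}$ lies in $[0,1)$, because we use the representatives $0,\dots,2^n-1$ scaled by $2^{-n}$. So their difference lies in $(-1,1)$, and
\[
5i-1<\zeta_{i,\ell}<5i+1 .
\]
This gives positivity immediately.

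Second, compare across the indices. At the transition $i=c$ versus $i+1=c+1$ (when $c<n$), we get $\zeta_{c+1,\ell}>5(c+1)-1\ge 9>2^{-n}=\zeta_{c,\ell}$. For two indices $c<i<i+1\le n$, we get $\zeta_{i+1,\ell}>5(i+1)-1=5i+4>5i+1>\zeta_{i,\ell}$. Chaining these with the constant block yields the full monotone chain on $[\ell,\ell+1)$. Since the intervals $[\ell,\ell+1)$ partition $[0,2^n-1)$, the claim follows pointwise everywhere on the domain.

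No step here is a real obstacle. The only point needing care is to make sure the representative convention gives $|(w^{(\ell+1)})_i-(w^{(\ell)})_i|<1$, so that the spacing $5$ between the consecutive offsets $5i$ absorbs it. This is exactly why the factor $5$ was chosen; any factor at least $2$ would suffice. Note that we deliberately do not use condition $(c)$ of Lemma~\ref{lem:sequence} here. It will matter later, for the positions at integer times, but not for monotonicity.
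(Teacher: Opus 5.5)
Your proof is correct and follows essentially the paper's argument. Like the paper, you reduce to $\zeta_{i,\ell}\le\zeta_{i+1,\ell}$ for each fixed $\ell$, split according to where $i$ sits relative to $c(u^{(\ell)})$, and use that the coordinate differences of $w^{(\ell+1)}$ and $w^{(\ell)}$ lie in $(-1,1)$. Your version is slightly cleaner: you treat the transition index $i=c$ separately, and you correctly note that the constant block follows from the definition of $\zeta_{i,\ell}$ alone, whereas the paper cites Lemma~\ref{lem:sequence}$(c)$ there without needing it.
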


    \begin{claimproof}[Proof of Claim~\ref{clm:sigma-monotone}]
        By Lemma~\ref{lem:sequence} $(a)$, the function $\sigma_1$ is a constant function that $\sigma_1 = 2^{-n} > 0$. 
        It now suffices to show that for each $i\in [n-1]$ and integer $0 \leq \ell \leq 2^n - 2$, the inequality $\zeta_{i, \ell} \leq \zeta_{i+1, \ell}$ holds. If $i < c\left( u^{(\ell)} \right)$, then by Lemma~\ref{lem:sequence} $(c)$, we have 
        $$
            2^{-n} = \zeta_{i, \ell} = \zeta_{i+1, \ell} = 2^{-n}.
        $$
        If $i \geq c\left( u^{(\ell)} \right)$, then 
        \begin{equation*}
            \zeta_{i+1, \ell} - \zeta_{i, \ell} \geq 5 + \left[\left( w^{(\ell+1)} \right)_{i+1} - \left( w^{(\ell)} \right)_{i+1}\right] - \left[\left( w^{(\ell+1)} \right)_{i} - \left( w^{(\ell)} \right)_{i}\right] > 0.
        \end{equation*}
        This completes the proof.
    \end{claimproof}

    \begin{claim}\label{clm:something-close}
        Let $s \in (0, 2^n - 1]$. Then there exists $i\in [n]$ such that 
        $$
            \norm{\int_0^s \sigma_i(t)dt} \leq 2^{-n}.
        $$
    \end{claim}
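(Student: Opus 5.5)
Let $D_i(s) \defeq \int_0^s \sigma_i(t)\,dt$. The plan is to first check that at every integer time the positions sit exactly on the grid $2^{-n}\mathbb{Z}$, and then handle a general time inside one unit interval using the coordinate $c(u^{(\ell)})$.

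\textbf{Step 1 (integer times).} I claim that for every integer $0\le \ell\le 2^n-1$ and every $i\in[n]$,
\[
D_i(\ell) \equiv \bigl(w^{(\ell)}\bigr)_i \pmod 1 .
\]
The proof is by induction on $\ell$. The base case $\ell=0$ holds because $u^{(0)}=0$. For the step from $\ell$ to $\ell+1$, use $D_i(\ell+1)-D_i(\ell)=\zeta_{i,\ell}$ and split into two cases.
\begin{itemize}
\item[--] If $i\le c(u^{(\ell)})$, then $\zeta_{i,\ell}=2^{-n}$. By Lemma~\ref{lem:sequence}$(c)$, $(u^{(\ell+1)})_i=(u^{(\ell)})_i+1$ in $\mathbb{Z}_{2^n}$, so $(w^{(\ell+1)})_i-(w^{(\ell)})_i\equiv 2^{-n}\pmod 1$.
\item[--] Otherwise $\zeta_{i,\ell}=5i+(w^{(\ell+1)})_i-(w^{(\ell)})_i$, which is congruent to $(w^{(\ell+1)})_i-(w^{(\ell)})_i$ modulo $1$ because $5i\in\mathbb{Z}$.
\end{itemize}
In both cases the induction goes through.

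\textbf{Step 2 (general times).} Fix $s\in(0,2^n-1]$ and write $s\in(\ell,\ell+1]$ with $\ell=\lceil s\rceil-1\in\{0,\dots,2^n-2\}$. Let $c\defeq c(u^{(\ell)})$. By Lemma~\ref{lem:sequence}$(b)$, $c$ is a genuine index in $[n]$, and $(u^{(\ell)})_c\in\{0,-1\}$ in $\mathbb{Z}_{2^n}$. By Step 1, $D_c(\ell)\equiv 0$ or $D_c(\ell)\equiv -2^{-n}\pmod 1$.

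Since $c\le c(u^{(\ell)})$, the rate of runner $c$ on $[\ell,\ell+1)$ is $\zeta_{c,\ell}=2^{-n}$. Hence
\[
D_c(s)=D_c(\ell)+(s-\ell)2^{-n},
\]
and $D_c(s)$ lies modulo $1$ either in $(0,2^{-n}]$ or in $(-2^{-n},0]$. In either case $\norm{D_c(s)}\le 2^{-n}$, so $i=c$ is the required index.

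\textbf{Main obstacle.} The only delicate point is choosing, for a given $s$, which unit interval to use. Taking $\ell=\lceil s\rceil-1$ puts $s$ in $(\ell,\ell+1]$. The endpoint $s=\ell+1$ is covered by continuity of $D_c$, since the formula extends to $s=\ell+1$ with the same constant rate $2^{-n}$.

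The rest is bookkeeping. One must use the nonnegative representatives of the coordinates when defining $w^{(\ell)}$, and check that the congruence in Step 1 holds regardless of that choice. It does, because every comparison is made modulo $1$.
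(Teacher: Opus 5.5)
Your proof is correct and follows the paper's argument. You take $\ell$ with $s\in(\ell,\ell+1]$ and $c=c(u^{(\ell)})$, note that runner $c$ sits at $0$ or $-2^{-n}$ modulo $1$ at time $\ell$ and then moves at rate $\zeta_{c,\ell}=2^{-n}$, and conclude that it stays within $2^{-n}$ of an integer up to time $s$. Your Step~1 induction makes explicit the integer-time congruence $\int_0^\ell\sigma_i\equiv (w^{(\ell)})_i \pmod 1$, which the paper asserts only ``by construction''. Working modulo $1$ also handles the representative issue cleanly: the paper's bound $z-2^{-n}\le\int_0^s\sigma_c\le z+2^{-n}$ tacitly treats $(w^{(\ell)})_c=1-2^{-n}$ as $-2^{-n}$.
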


    \begin{claimproof}[Proof of Claim~\ref{clm:something-close}]
        Let $\ell$ be the unique integer such that $s\in (\ell, \ell+1]$ and let $c \in [n]$ be the integer $c\left( u^{(\ell)} \right)$. Then by our construction of $\sigma_c$ and the sequence of vectors $\{w^{(i)}\}_{i=0}^{2^n-1}$, there exists an integer $z\in \mathbb{Z}$ such that
        \begin{equation}\label{eq:cut-l}
            \int_0^{\ell} \sigma_c(t) dt = z + \left( w^{(\ell)} \right)_c = z + 2^{-n} \cdot \left( u^{(\ell)} \right)_c.
        \end{equation}
        We also have
        \begin{equation}\label{eq:cut-frac}
            0\leq \int_{\ell}^{s} \sigma_c(t) dt = \zeta_{c,\ell}(s - \ell) \leq \zeta_{c, \ell} = 2^{-n}. 
        \end{equation}
        Hence, by \eqref{eq:cut-l} and \eqref{eq:cut-frac}, we have
        $$
            z - 2^{-n} \leq \int_{0}^s \sigma_c(t)dt \leq z + 2^{-n},
        $$
        Which implies
        $$
            \norm{\int_{0}^s \sigma_c(t)dt} \leq 2^{-n}.
        $$
        This completes the proof.
    \end{claimproof}

    Let $M \defeq \int_0^{2^n-1} \sigma_n(t)dt$. For each $i\in [n]$, define 
    $$
        \hat{\sigma}_i \defeq \left(1 + (i-1)\frac{\ve}{2(n-1)M} \right) \cdot \sigma_i.
    $$
    We note that $\hat{\sigma}_1 = \sigma_1$.
    Then by Claim~\ref{clm:sigma-monotone}, we obtain
    \begin{equation}\label{eq:strict-monotone}
        0 < \hat{\sigma}_1 < \cdots < \hat{\sigma}_n,
    \end{equation}
    where all the inequalities are strict.
    Also, by \eqref{eq:strict-monotone} and Claim~\ref{clm:sigma-monotone}, for each $i\in [n]$ and $s\in (0, 2^n - 1]$, we have
    \allowdisplaybreaks
    \begin{align*}
        \int_0^s \left( \hat{\sigma}_i(t) - \sigma_i(t) \right)dt &\leq
        \int_0^s \left( \hat{\sigma}_n(t) - \sigma_n(t) \right)dt\\
        &\leq \int_0^{2^n -1} \left( \hat{\sigma}_n(t) - \sigma_n(t) \right)dt\\
        &= \int_0^{2^n -1} \frac{\ve}{2M} \cdot \sigma_n(t) dt\\
        &= \frac{\ve}{2}.
    \end{align*}

    As $\norm{\cdot}$ is $1$-Lipschitz, together with this Claim~\ref{clm:something-close} implies the following.
    For all $s \in (0, 2^n - 1]$, there exists $i\in [n]$ such that 
    \begin{equation}\label{eq:except-last}
        \norm{\int_0^s \hat{\sigma}_i(t)dt} \leq 2^{-n} + \frac{\ve}{2} < 2^{-n} + \ve.
    \end{equation}

    For each $i\in [n]$, let $k_i \in [i-1, i)$ be the unique real number such that
    \begin{equation}\label{eq:make-int}
        k_i + \int_0^{2^n-1} \hat{\sigma}_i(t)dt \in \mathbb{Z}.
    \end{equation}
    
    We then define $2^n$-periodic function $\nu_i$ for each $i\in [n]$ as
    $$
        \nu_i \defeq \hat{\sigma}_i + k_i \cdot \mathbf{1}_{[2^n-1, 2^n)}.
    $$
    We note that $k_1 = 2^{-n}$, so $\nu_1$ is a constant function with function value $2^{-n}$.
    
    We now claim that $\nu_1, \dots, \nu_n$ are the desired functions. By our choice of $k_i$ and Claim~\ref{clm:sigma-monotone}, conditions $(a)$ and $(b)$ in the statement of Theorem~\ref{thm:relative-end-const} followed directly. Hence, it remains to prove $(c)$.
    Since the functions $\nu_i$ is $2^n$-periodic, and \eqref{eq:make-int}, it suffices to prove $(c)$ for $s\in (0, 2^n]$. If $s\in (0, 2^n - 1]$, then \eqref{eq:except-last} implies that there exists $i\in [n]$ such that
    \begin{equation}\label{eq:first}
        \norm{\int_0^s \nu_i(t)dt} < 2^{-n} + \ve.
    \end{equation}
    If $s\in (2^n-1, 2^n]$, then
    $$
        1 - 2^{-n} \leq \int_0^s \nu_1(t) dt = 2^{-n}s \leq 1.
    $$
    This implies that
    \begin{equation}\label{eq:second}
        \norm{\int_0^s \nu_1(t) dt} \leq 2^{-n}. 
    \end{equation}
    By combining \eqref{eq:first} and \eqref{eq:second}, the functions $\nu_1, \dots, \nu_n$ satisfies condition $(c)$ of the statement of Theorem~\ref{thm:relative-end-const}.
    This completes the proof.
\end{proof}


\section{Intermediate runners}\label{sec:middle}

In this section, we prove Theorem~\ref{thm:middle-construction}. 
The following theorem immediately implies Theorem~\ref{thm:middle-construction}.

\begin{theorem}\label{thm:reform-middle-construction}
    For a given integer $n\geq 3$ and a real number $\ve>0$, there exist 
    $n$ functions $\nu_1,\dots,\nu_n \in L^1_{\mathrm{loc}}(\mathbb{R}_{> 0})$ such that
    
    \begin{enumerate}
        \item[$(a)$] $\nu_1 < \nu_2 < \cdots <\nu_n$ almost everywhere on $\mathbb{R}_{> 0}$,
        \item[$(b)$] $\int_0^{\infty} (\nu_j(t) - \nu_i(t))dt = \infty$ for all $1 \leq i < j \leq n$,
        \item[$(c)$] for all $2 \leq i \leq n-1$ and $s >0$, there exists $j (\neq i)\in [n]$ such that 
        $$
            \norm{\int_0^s (\nu_j(t) - \nu_i(t)) dt} < \ve.
        $$
    \end{enumerate}
\end{theorem}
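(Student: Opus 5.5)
The idea is to let every pair of runners separate by whole laps. All the separation happens in short phases during which the runners split into two rigid blocks, and every intermediate runner always has a block-mate. Concretely, I would build $\nu_1,\dots,\nu_n$ on consecutive unit time intervals $[m,m+1)$, $m=0,1,2,\dots$, organised in cycles of $n-1$ phases. In the phase labelled by the split index $p\in\{2,\dots,n\}$, the lower block $\{1,\dots,p-1\}$ and the upper block $\{p,\dots,n\}$ are used.

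On that interval I would set
\[
\nu_j(t)=\eta_m\, j \quad (j<p), \qquad \nu_j(t)=1+\eta_m\, j \quad (j\ge p).
\]
Here $\eta_m>0$ is chosen with $\sum_m n\eta_m<\ve/2$, for instance $\eta_m=\ve 2^{-m-2}/n$. Condition $(a)$ holds strictly everywhere, and each $\nu_j$ is a step function, hence locally integrable.

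The key bookkeeping is to write the relative position of runner $k$ with respect to runner $j<k$ as
\[
D_{jk}(s)\defeq\int_0^s(\nu_k-\nu_j)\,dt = L_{jk}(s)+E_{jk}(s).
\]
The lap part $L_{jk}$ collects the contributions of the ``$+1$'' terms, and the drift part $0\le E_{jk}(s)\le \sum_m (k-j)\eta_m<\ve/2$ collects the $\eta$ terms. At each integer time $L_{jk}$ is a nonnegative integer.

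For condition $(b)$, I would note that the pair $j<k$ gains a full lap in every phase whose split index satisfies $j<p\le k$, for instance $p=k$. Such a phase occurs once per cycle, so $L_{jk}(s)\to\infty$ and $\int_0^\infty(\nu_k-\nu_j)=\infty$.

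For condition $(c)$, fix an intermediate runner $k$ with $2\le k\le n-1$ and a time $s$ in the phase with split index $p$. If $k\le p-1$, then $k\ge2$ means $k-1$ is also in the lower block. Otherwise $k\ge p$, and since $k\le n-1$, runner $k+1$ is also in the upper block. In either case $k$ has a partner $j$ in its own block. Within the current phase the two runners receive the same ``$+1$'' contribution, so $L_{jk}(s)$ equals its value at the start of the phase, which is an integer. Hence $\norm{D_{jk}(s)}\le |E_{jk}(s)|<\ve/2<\ve$.

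The only point needing care, and the one I expect to be the main obstacle, is checking that the lap parts really are integers whenever they are used. This holds because each phase has unit length and adds $0$ or exactly $1$ to each $L_{jk}$. The drift must also stay uniformly small forever even though it is needed for strict ordering, and summability of $\eta_m$ handles that. Finally, $n\ge3$ is exactly what guarantees that both blocks containing an intermediate runner have size at least two in every phase, which settles $(c)$ and completes the construction.
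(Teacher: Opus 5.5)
Your proposal is correct and is essentially the paper's construction. At every time exactly one consecutive velocity gap is large (your split between the blocks $\{1,\dots,p-1\}$ and $\{p,\dots,n\}$ is the large gap between runners $p-1$ and $p$), and this gap moves cyclically from the slowest pair to the fastest pair, so every intermediate runner always has a neighbour separated from it only by a small gap. The only difference is the error bookkeeping: you make the large gap exactly $1+\eta_m$ and keep the accumulated drift below $\varepsilon/2$ with a summable sequence $\eta_m$. The paper instead uses periodic functions with constant small gap $\delta$ and large gap $a=1-(n-2)\delta$, so each consecutive gap integrates to exactly $1$ over a period and the error stays below $(n-2)\delta<\varepsilon$.
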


The construction is based on the velocity gaps between consecutive runners. At each time, exactly one consecutive velocity gap is relatively large, while all the other consecutive velocity gaps are small. The large velocity gap moves cyclically from the slowest pair to the fastest pair. This ensures that, although one pair of consecutive runners may be separated, every intermediate runner remains close to at least one of its two neighboring runners.

\begin{proof}[Proof of Theorem~\ref{thm:reform-middle-construction}]
    Let $m\defeq n-1$. Choose a real number $\delta$ such that
    $$
        0 < \delta < \min\left\{ \frac{\ve}{m-1}, \frac{1}{m} \right\},
    $$
    and set
    $$
        a \defeq 1-(m-1)\delta.
    $$
    The choice of $\delta$ implies that $a>\delta>0$.

    For each $k\in[m]$, we first define a function
    $g_k:[0,m)\rightarrow\mathbb{R}_{>0}$ by
    \begin{equation*}
        g_k(t) \defeq
        \begin{cases}
            a & \text{if }t\in[k-1,k), \\
            \delta & \text{otherwise}.
        \end{cases}
    \end{equation*}
    We then extend each $g_k$ to $\mathbb{R}_{\geq 0}$ as an $m$-periodic function and say the intervals $\mathbb{Z}_{\geq 0} + [k-1, k)$ the active intervals of $g_k$.

    For every $t\geq 0$, exactly one of the functions $g_1(t),\dots,g_m(t)$ has value $a$, while all the other functions have value $\delta$. Consequently, for every $k\in[m]$,
    \begin{equation}\label{eq:periodic}
        \int_0^m g_k(t)dt = a+(m-1)\delta = 1.
    \end{equation}

    We now construct functions $\nu_1,\dots,\nu_n$ by
    $$
        \nu_1(t)\defeq 0
    $$
    and
    $$
        \nu_i(t) \defeq \sum_{k=1}^{i-1}g_k(t) \text{ for each }i\in[n]\setminus\{1\}.
    $$
    These functions are periodic and locally integrable. Furthermore, for every $i\in[n-1]$,
    $$
        \nu_{i+1}(t)-\nu_i(t) = g_i(t) > 0
    $$
    for all $t\geq 0$. Hence,
    $$
        \nu_1(t)<\nu_2(t)<\cdots<\nu_n(t)
    $$
    for all $t\geq 0$, which means $\nu_1, \dots, \nu_n$ satisfies condition $(a)$.

    We also verify condition $(b)$. Let $1\leq i < j \leq n$. By \eqref{eq:periodic}, for every positive integer $q$, we have
    \allowdisplaybreaks
    \begin{align*}
        \int_0^{qm} \left(\nu_j(t)-\nu_i(t)\right)dt &= \sum_{k=i}^{j-1} \int_0^{qm}g_k(t)dt \\
        &= q\sum_{k=i}^{j-1} \int_0^m g_k(t)dt \\
        &= q(j-i).
    \end{align*}
    
    Therefore,
    $$
        \int_0^\infty \left(\nu_j(t)-\nu_i(t)\right)dt = \infty.
    $$
    Thus, $(b)$ also holds.
 
    It remains to prove the functions $\nu_1, \dots, \nu_n$ satisfy $(c)$. For each $i, j\in[m]$ and $s\geq 0$, define
    $$
        d_{(i, j)}(s) \defeq \int_0^s (\nu_i(t)- \nu_j(t))dt.
    $$
    Since $\nu_i$ and $\nu_j$ are $m$-periodic and both of their integrals over one period are equal to $1$ provided by \eqref{eq:periodic}, we have
    $$
        d_{(i, j)}(s+m)=d_{(i, j)}(s) + (i - j).
    $$
    Consequently, for all $s \geq 0$, it holds that
    $$
        \norm{d_{(i, j)}(s+m)} = \norm{d_{(i, j)}(s)}.
    $$
    Hence, it suffices to consider $s\in[0,m)$.

    \begin{claim}\label{clm:close}
        Fix $s\in[0,m)$ and let $\ell\in[m]$ be the unique integer such that $s\in[\ell-1,\ell)$. Then for every $i\in[m]\setminus\{\ell\}$, we have
        $$
            \norm{d_{(i+1, i)}(s)} \leq \ve.
        $$
    \end{claim}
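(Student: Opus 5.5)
Since $\nu_{i+1}-\nu_i = g_i$ on $\mathbb{R}_{\geq 0}$, we have $d_{(i+1,i)}(s) = \int_0^s g_i(t)\,dt$. The plan is to compute this integral explicitly for $s\in[\ell-1,\ell)$ with $i\neq \ell$. We then show that it lies within $(m-1)\delta$ of an integer, and use the choice $\delta<\ve/(m-1)$.

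We split according to whether the active interval $[i-1,i)$ of $g_i$ lies before or after $s$.

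\emph{Case $i>\ell$.} Then $[0,s)\subseteq[0,\ell)$ does not meet $[i-1,i)$, so $g_i=\delta$ on $[0,s)$. Hence
\[
0\leq d_{(i+1,i)}(s)=\delta s<\delta m .
\]
This is not quite small enough, so we refine the bound: since $s<\ell\leq i-1\leq m-1$, we actually get $\delta s<(m-1)\delta<\ve$.

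\emph{Case $i<\ell$.} Then $i\leq \ell-1\leq s$, so $[i-1,i)\subseteq[0,s)$. Hence
\[
d_{(i+1,i)}(s)=a+\delta(s-1)=1-(m-1)\delta+\delta(s-1)=1-\delta(m-s).
\]
Since $s\geq \ell-1\geq i\geq 1$, we have $0<\delta(m-s)\leq \delta(m-1)<\ve$. So $d_{(i+1,i)}(s)$ lies within $\ve$ of the integer $1$.

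In both cases $\norm{d_{(i+1,i)}(s)}\leq \ve$, which proves the claim.

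The only delicate point is this boundary bookkeeping: we need $s<m-1$ in the first case and $s\geq 1$ in the second. Both hold precisely because $i\neq\ell$ lies in $[m]$, so the cases themselves exclude the extremes.

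After the claim, condition $(c)$ follows quickly. Let $2\leq i\leq n-1$, and note that $i-1$ and $i$ both lie in $[m]$. By the claim, at most one of the pairs $(i,i-1)$ and $(i+1,i)$ can be bad, namely the one whose index equals $\ell$. The other pair gives a neighbour $j\in\{i-1,i+1\}$ with $\norm{\int_0^s(\nu_j-\nu_i)\,dt}\leq\ve$, using $\norm{-x}=\norm{x}$. To obtain the strict inequality $<\ve$ required in $(c)$, note that the bounds in both cases are in fact strict.
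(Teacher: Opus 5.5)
Your proof is correct and matches the paper's argument: both reduce to $\int_0^s g_i$ and split into the case $i>\ell$, where the integral is $\delta s$, and the case $i<\ell$, where it is $1-\delta(m-s)$. In the second case, you use $s\ge 1$ to get $\delta(m-s)\le\delta(m-1)<\ve$, which is cleaner than the paper's written step $\norm{\delta(s-m)}\le\norm{\delta m}<\ve$; the last inequality there does not follow from $\delta<\ve/(m-1)$ alone.
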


   \begin{claimproof}[Proof of Claim~\ref{clm:close}]
        We note that $d_{(i+1, i)}(s) = \int_0^s g_i(t)dt$. 
        First, suppose that $i > \ell$. In this case, $s < \ell \leq i-1$, so that the interval $[0, s]$ does not intersect with positive measure with the active intervals of $g_i$.
        Hence, we have
        $$
            0 \leq \int_0^s g_i(t)dt = \delta s \leq \delta(m-1) < \ve, 
        $$
        which implies
        $$
            \norm{d_{(i+1, i)}(s)} < \ve.
        $$

        We now suppose that $i < \ell$. Then $s\geq \ell-1\geq i$, so the active interval $[i-1,i)$ of $g_k$ fully contained in the interval $[0, s]$. Thus, we have
        $$
            \int_0^s g_i(t)dt = \delta (s-1) + a = 1 + \delta(s - m). 
        $$
        This implies
        $$
            \norm{d_{(i+1, i)}(s)} = \norm{\delta(s-m)} \leq \norm{\delta m} < \ve.
        $$
        This completes the proof.
    \end{claimproof}

    We consider a fixed real number $s\in [0, m)$ and let $\ell\in[m]$ be the unique integer such that $s\in[\ell-1,\ell)$. Then for all $i \in [n-1]\neq \{\ell\}$, by Claim~\ref{clm:close}, we have
    \begin{equation}\label{eq:close2}
        \norm{\int_0^s (\nu_{i+1}(t) - \nu_i(t))dt} < \ve,
    \end{equation}
    which proves $(c)$ for this case. 
    Thus, we now assume $i = \ell$. If $\ell = 1$, then \eqref{eq:close2} already proved $(c)$, so we may assume $i = \ell \neq 1$. Then by \eqref{eq:close2}, we have
    $$
        \norm{\int_0^s (\nu_{\ell-1}(t) - \nu_{\ell}(t))dt} < \ve.
    $$
    Hence the functions $\nu_1, \dots, \nu_n$ also satisfies $(c)$.
    This completes the proof.
\end{proof}

We remark that Theorem~\ref{thm:reform-middle-construction} determines the optimal simultaneous bound for the intermediate runners. Namely,
$$
    \inf\sup_{s>0}\max\left\{\calL_i(s):i\in[n]\setminus\{1,n\}\right\} = 0,
$$
where 
$$
    \calL_i(s) \defeq \min\left\{\norm{\int_0^s \bigl(\nu_j(t)-\nu_i(t)\bigr)dt}:j\in[n]\setminus\{i\}\right\}.
$$
and the infimum is taken over all families of $n$ innocent runners.
However, this infimum is not attained. Indeed, fix an innocent family of runners and two distinct indices $i,j\in[n]$. Then the function
$$
    s \longmapsto \int_0^s \left(\nu_j(t)-\nu_i(t)\right)dt
$$
is continuous and strictly monotone. Therefore, for every integer $z$, it takes the value $z$ at most once. It follows that the set of times at which runners $i$ and $j$ occupy the same point of the circle is countable. Since there are only finitely many choices for $j\neq i$, the set of times at which runner $i$ occupies the same point as any other runner is also countable. Consequently, no intermediate runner can be at distance exactly zero from another runner at every time.


\section{Concluding remarks}\label{sec:concluding}

In this paper, we study a variation of the celebrated Lonely Runner Conjecture by considering a more flexible model that better reflects real-world running. In contrast to the Lonely Runner Conjecture, which remains widely open, our more flexible setting surprisingly admits an optimal result.
We note that this optimality is not an artifact of allowing functions with low regularity. Indeed, all the functions $\nu_1,\dots,\nu_n$ in Theorems~\ref{thm:relative-end-const} and~\ref{thm:reform-middle-construction} can be taken to be smooth by routinely mollifying them.

As we mentioned in Section~\ref{sec:intro}, our results have a connection to a billiard ball motion in the unit cube. A beautiful result of Schoenberg~\cite{Schoenberg} states that every appropriate billiard ball motion inside the unit cube $[0, 1]^n$ touching the inner cube $[\frac{1}{2n}, 1 - \frac{1}{2n}]^n$, and the number $\frac{1}{2n}$ cannot be replaced by a larger number (see also~\cite{View-2,survey}). By reflecting and unfolding the billiard table, which is the cube $[0, 1]^n$, the following is the rigorous statement of Schoenberg's theorem.

\begin{theorem}[Schoenberg~\cite{Schoenberg}]\label{thm:schoenberg}
    Let $n \geq 1$ be an integer and $f_1, \dots, f_n$ be $n$ linear functions defined on $\mathbb{R}_{\geq 0}$ which are not constant. Then there exists $t > 0$ such that
    $$
        \min\{\norm{f_i(t)}: i\in [n]\} \geq \frac{1}{2n}.
    $$
    Moreover, the number $\frac{1}{2n}$ is the best possible.
\end{theorem}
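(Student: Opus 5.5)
The plan is to prove the two halves of Theorem~\ref{thm:schoenberg} separately: the existence of $t$ via a Kronecker/Weyl equidistribution argument, and optimality via an explicit family of integer slopes.

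\textbf{Existence.} Write $f_i(t)=a_it+b_i$ with $a_i\neq 0$. If $b_i=0$ for all $i$ (the billiard ball starts at a vertex of the unfolded cube), this is the classical statement behind the Lonely Runner problem, and the answer $\frac{1}{2n}$ follows from the known bound $\frac{1}{2n}$ (Chen, Perarnau--Serra, etc., cited in the introduction). For general $b_i$, I will use a measure argument.

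\begin{enumerate}
\item Consider the orbit $t\mapsto (a_1t+b_1,\dots,a_nt+b_n)$ in $\mathbb{T}^n$. By Weyl's theorem, its time averages equidistribute on a coset $x_0+H$ of a subtorus $H$, the closure of $\{(a_1t,\dots,a_nt)\}$.
\item Let $B_i=\{x:\|x_i\|<\frac{1}{2n}\}$. On $x_0+H$, each coordinate $x_i$ is a nonconstant character restricted to a coset, since $a_i\neq 0$. Its pushforward is therefore uniform on $\mathbb{R}/\mathbb{Z}$, or uniform on a finite coset $\frac{1}{q}\mathbb{Z}+c$.
\item Compare measures: if every orbit point lay in $\bigcup_i B_i$, the union would have full measure on $x_0+H$. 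In the continuous case each $B_i$ has measure exactly $\frac1n$, which is not enough on its own, since only the boundary of each $B_i$ is excluded.
\end{enumerate}

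The main obstacle is making this measure comparison rigorous, i.e.\ showing the open sets $B_i$ cannot cover the coset. In the continuous case they have total measure at most $1$, so the question reduces to whether finitely many open sets of total measure exactly $1$ can cover a compact connected set. They cannot unless they overlap only on a null set, and an open cover with pairwise null overlaps is impossible by connectedness. In the finite cyclic case I will instead count lattice points, as in the classical $\frac{1}{2n}$ Lonely Runner bound.

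\textbf{Optimality.} Take $f_i(t)=it$ for $i\in[n]$. By the sharpness of Dirichlet's inequality \eqref{eq:diricklet}, $\min_i\|it\|\le\frac{1}{n+1}$ for every $t$; this shows the constant cannot exceed $\frac{1}{n+1}$. To show that $\frac{1}{2n}$ itself is best possible, I will instead use the slopes $f_i(t)=(2i-1)t$ and check that some $\|(2i-1)t\|\le \frac{1}{2n}$ for every $t$. This is the standard extremal example attributed to Schoenberg, and I expect it to reduce to a pigeonhole argument over the $n$ odd multiples.
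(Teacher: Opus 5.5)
The paper gives no proof of this theorem; it only cites Schoenberg. Your proposal therefore has to stand on its own, and its optimality half is wrong. The example $f_i(t)=(2i-1)t$ fails at once: at $t=\tfrac12$ each value $(2i-1)/2$ is a half-integer, so $\min_i\norm{f_i(1/2)}=\tfrac12>\tfrac1{2n}$ for every $n\ge 2$. More fundamentally, no example with all offsets $b_i=0$ can show sharpness. That case is exactly the lonely runner setting, where the constant is strictly better (already for $n=2$ it is $\tfrac13>\tfrac14$). The sharpness of $\tfrac1{2n}$ comes from the freedom in the starting point. Take $f_i(t)=t+\tfrac{i-1}{n}$: for every $t$ the values $f_i(t)$ form a coset of $\tfrac1n\mathbb{Z}/\mathbb{Z}$. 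Such a coset meets the arc $[-\tfrac1{2n},\tfrac1{2n})$, so $\min_i\norm{f_i(t)}\le\tfrac1{2n}$ for all $t$.

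The existence half has the right ingredients, but the connectedness argument is applied to the wrong set. Showing that $B_1,\dots,B_n$ do not cover the coset $x_0+H$ only produces a point of the orbit \emph{closure} with all coordinates at distance at least $\tfrac1{2n}$ from $0$. The theorem needs an actual time $t>0$. When $\dim H\ge2$ the orbit $\{x(t):t>0\}$ is $\mu$-null, and nothing in your argument stops the closed set $(x_0+H)\setminus U$, with $U:=\bigcup_iB_i$, from missing it.

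Your step ``orbit $\subseteq U$ implies $\mu(U)=1$'' is also false for general open sets, since thin open neighbourhoods of a dense line have small measure. Here it holds only because $\partial U\subseteq\bigcup_i\{\norm{x_i}=\tfrac1{2n}\}$ is $\mu$-null, so Weyl's theorem applies to $\mathbf{1}_U$. Each coordinate pushes $\mu$ forward to Lebesgue measure, because $H$ contains the curve $(a_1t,\dots,a_nt)$; your finite cyclic case never occurs.

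The repair is as follows. Assume the orbit lies in $U$. Then $\mu(U)=1=\sum_i\mu(B_i)$ forces $\mu(B_i\cap B_j)=0$, hence $B_i\cap B_j=\emptyset$, since Haar measure on the coset has full support. The connected curve $\{x(t):t>0\}$ then lies in a disjoint union of relatively open sets, hence in a single $B_i$. This is absurd, because $a_it+b_i$ passes through half-integers.

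A torus-free version also works. The time sets $E_i=\{t>0:\norm{f_i(t)}<\tfrac1{2n}\}$ are open, nonempty, and have density exactly $\tfrac1n$. If they covered $(0,\infty)$, connectedness would give some nonempty $E_i\cap E_j$ with $i\ne j$. This overlap has positive density: it is periodic if $a_i/a_j\in\mathbb{Q}$, and has density $n^{-2}$ by equidistribution otherwise. Then $\mathbf{1}_{\bigcup_kE_k}\le\sum_k\mathbf{1}_{E_k}-\mathbf{1}_{E_i\cap E_j}$ shows the union has upper density below $1$, a contradiction.
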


We note that, with slight modifications to the proofs of Theorems~\ref{thm:relative} and~\ref{thm:relative-end-const}, the assumption $0<\nu_1<\cdots<\nu_n$ can be relaxed to $0<\nu_1\leq\cdots\leq\nu_n$. Under this weaker assumption, \eqref{eq:relative} remains valid with $>2^{-n}$ replaced by $\geq 2^{-n}$. Likewise, condition~\textup{(c)} of Theorem~\ref{thm:relative-end-const} remains true with $<2^{-n}+\ve$ replaced by $\leq 2^{-n}$.
As a direct corollary of these, we obtain an optimal nonlinear analogue of Theorem~\ref{thm:schoenberg}, summarized as follows.

\begin{corollary}\label{cor:billiard}
    Let $n \geq 1$ be an integer and $f_1, \dots, f_n$ be $n$ continuous and strictly increasing functions defined on $\mathbb{R}_{\geq 0}$ such that $0 < f'_1 \leq \cdots \leq f'_n$ almost everywhere and $\lim_{x\to \infty}f_1(x) = \infty$.
    Then there exists $t > 0$ such that
    \begin{equation}\label{eq:billiard}
        \min\{\norm{f_i(t)}: i\in [n]\} \geq 2^{-n}.
    \end{equation}
    Moreover, the number $2^{-n}$ is the best possible.
\end{corollary}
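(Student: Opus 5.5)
The corollary has two halves: the existence statement \eqref{eq:billiard}, and optimality of $2^{-n}$. Both come from the weak-inequality versions of Theorems~\ref{thm:relative} and~\ref{thm:relative-end-const}, applied with $\nu_i \defeq f_i'$ and $\theta_i \defeq f_i(0)$.

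\textbf{Setup.} First I would justify the regularity. The intended reading is that each $f_i$ is absolutely continuous on compact intervals, so that $f_i(s) = f_i(0) + \int_0^s f_i'(t)\,dt$ with $f_i' \in L^1_{\mathrm{loc}}$; I would state this as the standing interpretation of the hypotheses. Then $d_i = f_i$ in the notation of Observation~\ref{obs:distance}. The hypotheses $0<\nu_1\le\cdots\le\nu_n$ a.e. and $\int_0^\infty \nu_1 = \lim_{x\to\infty} f_1(x) - f_1(0) = \infty$ give the weak form of \eqref{eq:monotone}:
\[
0 < d_1(s)-d_1(r) \le d_2(s)-d_2(r) \le \cdots \le d_n(s)-d_n(r)
\]
for $r<s$. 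Each $d_i$ is still continuous, strictly increasing, and unbounded.

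\textbf{Existence.} I would rerun Lemma~\ref{lem:induction} with ``$>\delta/2$'' replaced by ``$\ge\delta/2$''. Start from $s^*_k$ with $\norm{d_i(s^*_k)}\ge\delta$ for all $i\in[k]$. Choose $\alpha<s^*_k<\beta$ with $d_k(s^*_k)-d_k(\alpha)=d_k(\beta)-d_k(s^*_k)=\delta/2$. By the weak monotonicity, for $i\le k$ and $t\in[\alpha,\beta]$ we get $|d_i(t)-d_i(s^*_k)|\le\delta/2$, hence $\norm{d_i(t)}\ge\delta/2$. On the closed interval we also have $d_{k+1}(\beta)-d_{k+1}(\alpha)\ge\delta$, so by continuity $d_{k+1}([\alpha,\beta])$ is an interval of length at least $\delta$. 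It therefore contains a point of $\mathbb{Z}+[\delta/2,1-\delta/2]$: any interval of length $\delta$ meets that set, because the complementary gaps $(-\delta/2,\delta/2)+\mathbb{Z}$ are open intervals of length exactly $\delta$. Induction from $\delta=1/2$ at $k=1$ then gives $\ge 2^{-n}$, and the requirement $t>0$ is automatic. The only delicate point is this closed-versus-open bookkeeping, which replaces the $\ve$-shrinking in the original proof.

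\textbf{Optimality.} Take the functions $\sigma_i$ from the proof of Theorem~\ref{thm:relative-end-const}, with no $\hat{\sigma}$ perturbation. Claim~\ref{clm:sigma-monotone} gives $0<\sigma_1\le\cdots\le\sigma_n$, and Claim~\ref{clm:something-close} gives the bound $\le 2^{-n}$ for $s\in(0,2^n-1]$. Since $\int_0^{2^n-1}\sigma_i$ is already an integer for $i\ge2$, I would argue the correction $k_i$ can be chosen as a nonnegative integer; take $k_i = i-1$, keeping weak monotonicity, while $k_1=2^{-n}$. Set $\nu_i=\sigma_i+k_i\mathbf{1}_{[2^n-1,2^n)}$ extended $2^n$-periodically and $f_i(s)=\int_0^s\nu_i$. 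Then on $(2^n-1,2^n]$ runner $1$ is within $2^{-n}$ of $0$, and periodicity finishes.

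\textbf{Main obstacle.} The hardest step is checking integrality, i.e.\ that $\int_0^{2^n-1}\sigma_i \equiv (w^{(2^n-1)})_i$ modulo $1$ lands on an integer. If it does not, I would instead choose $k_i\in[i-1,i)$ so that the sum is an integer. This preserves $k_1\le\cdots\le k_n$ up to ties, because the $\zeta$ values carry the $5i$ spacing. It also keeps the bound $\le 2^{-n}$ exactly, since the $\ve$-perturbation was needed only for strictness.
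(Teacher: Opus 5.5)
Your route is the paper's: the corollary is obtained from weak-inequality versions of Theorems~\ref{thm:relative} and~\ref{thm:relative-end-const}. Your closed-interval covering argument is a clean way to do the existence half without the $\ve$-shrinking. As in Lemma~\ref{lem:induction}, you should take $s^*_k$ large enough that $\alpha$ exists and exceeds the given $T$, so that good times remain unbounded for the next step.

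Your standing assumption of local absolute continuity is genuinely necessary, not just convenient, and it is what the paper tacitly uses when it identifies $f_i$ with $\theta_i+\int_0^t\nu_i$. Without it, one can add rescaled strictly increasing singular functions (e.g.\ Minkowski's question-mark function) to linear functions. This keeps $f_1'\le f_2'$ a.e.\ while letting $f_1$ outrun $f_2$ on chosen intervals, and the conclusion then fails already for $n=2$.

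In the optimality half, your primary claim is false. One has $\int_0^{2^n-1}\sigma_i\equiv 2^{-n}\bigl(u^{(2^n-1)}\bigr)_i \pmod 1$. By the parity condition $(d)$ in the proof of Lemma~\ref{lem:sequence}, every coordinate of $u^{(2^n-1)}$ is odd, so this is never an integer. Explicitly, $u^{(2^n-1)}=(2^n-1,2^{n-1}-1,\dots,3,1)$ and the residue is $2^{1-i}-2^{-n}$.

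With $k_i=i-1$ the runners are not back at $0$ at time $2^n$, and the bound genuinely fails. For $n=2$ one gets $\int_0^3\sigma_2=11.25$, so $f_2(4)=12.25$. At $s=6.5$ this gives $f_1(6.5)=1.625$ and $f_2(6.5)=23.375$, so both $\norm{f_1(6.5)}$ and $\norm{f_2(6.5)}$ equal $3/8>1/4$.

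So your fallback, choosing $k_i\in[i-1,i)$ with $k_i+\int_0^{2^n-1}\sigma_i\in\mathbb{Z}$, is mandatory rather than optional. It is exactly the paper's choice and it works. The ordering $0<k_1<k_2<\cdots<k_n$ is immediate from the disjointness of the intervals $[i-1,i)$; the $5i$ spacing plays no role there, and there are no ties.

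Note also that optimality alone does not need the unperturbed construction at all. Theorem~\ref{thm:relative-end-const} as stated already gives admissible $f_i(t)=\int_0^t\nu_i$ with $\min_i\norm{f_i(t)}<2^{-n}+\ve$ for every $t$ and arbitrary $\ve>0$. That already shows no constant larger than $2^{-n}$ can work.
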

Here, the monotonicity of the differentiated functions is not necessarily along the coordinates, because \eqref{eq:billiard} is invariant under the permutation of coordinates. Also, the monotonicity assumption $0 < f'_1 \leq \cdots \leq f'_n$ is not artificial, since by reordering the coordinates, the assumption in Theorem~\ref{thm:schoenberg} is equivalent to being $0 < f'_1 \leq \cdots \leq f'_n$ as well.
As the Lonely Runner Conjecture and the billiard ball motion in the unit cube are highly related to each other, as well as view-obstruction problems and geometry of numbers, we believe that our result would have interesting connections to those areas. We refer the reader to the paper~\cite{equivalent,survey} for such connections.

To conclude, our main results show that a runner who moves either too fast or too slowly will eventually experience loneliness. Thus, the only way to avoid loneliness is to find a companion who can run alongside them! 

\section*{Acknowledgements}
The author thanks Professor J\"{o}rg Wills for his historical comments to the author on the Lonely Runner Conjecture.

\section*{Declaration on the use of generative AI}

The author used generative AI to assist in generalizing an initial construction for the case $n=3$ in Theorem~\ref{thm:reform-middle-construction}, which had been developed independently by the author. This assistance contributed to the formulation of the general construction for $n\geq 4$ presented in the proof of Theorem~\ref{thm:reform-middle-construction}.
Apart from this use, all statements of the main results, as well as the principal ideas and proofs, were developed independently by the author. The manuscript itself was written entirely by the author. All AI-assisted material was carefully verified, revised, and finalized by the author, who takes full responsibility for the entire content of the paper.


\vspace{-0.2cm}

\providecommand{\MR}[1]{}
\providecommand{\MRhref}[2]{%
  \href{http://www.ams.org/mathscinet-getitem?mr=#1}{#2}
}

    \bibliographystyle{amsplain_initials_nobysame}
    \bibliography{bibfile}


\end{document}